\theoremstyle{remark}
\newtheorem{para}{\bf}[section]
\theoremstyle{definition}
\theoremstyle{plain}
\newtheorem{thm}[para]{Theorem}
\newtheorem{lemma}[para]{Lemma}
\newtheorem{cor}[para]{Corollary}
\newtheorem{prop}[para]{Proposition}
\newenvironment{numequation}{\addtocounter{para}{1}
\begin{equation}}{\end{equation}}
\newcommand{\bbC}{{\mathbb C}}
\newcommand{\bbF}{{\mathbb F}}
\newcommand{\bbK}{{\mathbb K}}
\newcommand{\bbQ}{{\mathbb Q}}
\newcommand{\bbZ}{{\mathbb Z}}
\newcommand{\bA}{{\bf A}}
\newcommand{\bG}{{\bf G}}
\newcommand{\bP}{{\bf P}}
\newcommand{\cB}{{\mathcal B}}
\newcommand{\cF}{{\mathcal F}}
\newcommand{\cG}{{\mathcal G}}
\newcommand{\cL}{{\mathcal L}}
\newcommand{\cO}{{\mathcal O}}
\newcommand{\cS}{{\mathcal S}}
\newcommand{\sH}{{\mathscr H}}
\newcommand{\sL}{{\mathscr L}}
\newcommand{\End}{{\rm End}}
\newcommand{\Pf}{{\it Proof. }}
\newcommand{\Spec}{{\rm Spec}}
\newcommand{\car}{\stackrel{\simeq}{\longrightarrow}}
\newcommand{\LG}{LG}
\newcommand{\hT}{\tilde{T}}
\newcommand{\KTs}{\bbK'_{\tilde{T}}}
\newcommand{\ke}{k}
\newcommand{\kee}{\overline{\bbF}_p}
\begin{document}

\title{Hecke algebras and affine flag varieties in characteristic $p$}
\author{Tobias Schmidt}
\address{Institute de Recherche Math\'ematique de Rennes, 
Universit\'e de Rennes 1, Campus de Beaulieu, 35042 Rennes, France}
\email{Tobias.Schmidt@univ-rennes1.fr}

\thanks{The author would like to acknowledge support of the Heisenberg programme of Deutsche Forschungsgemeinschaft (SCHM 3062/1-1).}

\begin{abstract} Let $G$ be a split semi-simple $p$-adic group and let $\sH$ be its Iwahori-Hecke algebra with coefficients in the algebraic closure $\kee$ of $\bbF_p$. Let $\cF$ be the affine flag variety over $\kee$ associated with $G$.
We show, in the simply connected simple case, that a torus-equivariant $K'$-theory of $\cF$ (with coefficients in $\kee$) admits an action of $\sH$ by Demazure operators and that this provides a model for the regular representation of $\sH$.

\end{abstract}
\maketitle

\tableofcontents

\section{Introduction}

Let $F$ be a $p$-adic local field with ring of integers $o_F$ and let $\kee$ be the algebraic closure of its residue field. Let $G$ be a connected reductive linear algebraic group over $F$ and denote by $G(F)$ the group of its $F$-rational points. For a choice of Iwahori subgroup $I\subset G(F)$, the corresponding Iwahori-Hecke algebra $\sH_{\kee}$ over $\kee$ equals the convolution algebra $\kee[I\setminus G(F)/I]$ of $\kee$-valued functions with finite support on the double cosets of $G(F)$ mod $I$. The algebra $\sH_{\kee}$ and a certain extension $\sH_{\kee}^{(1)}$ of it associated to the maximal pro-$p$ subgroup of $I$ - the pro-$p$ Iwahori-Hecke algebra -  were systematically studied by Vign\'eras \cite{VignerasProp1}, \cite{VignerasGeneric}, \cite{VignerasProp2}. Among their first applications is the mod $p$ local Langlands program for the group $G(F)$ \cite{GK_phigamma1}.
\vskip8pt

In this note we only deal with $\sH_{\kee}$ and realize it as an equivariant algebraic $K$-theory of its affine flag variety $\cF$ over $\kee$. 
We view this result as a first step to analyze the mod $p$ geometry of the algebras $\sH_{\kee}$ and $\sH_{\kee}^{(1)}$. In particular, we hope that the geometry of affine Springer fibres in $\cF$ \cite{Goertz_Affine} will eventually shed a light on the representation theory of these algebras, in analogy to the classical case of the finite complex flag variety \cite{KLDeligneLanglands}. We also note that the occurence of affine geometry in the mod $p$ setting can be viewed as a consequence of the failure of the Bernstein presentation for the algebras $\sH_{\kee}$ and $\sH_{\kee}^{(1)}$ \cite{VignerasGeneric}.

\vskip8pt

 To give more details about the present article, let us assume that $G$ extends to a split simply connected semisimple and simple group scheme over the integers. We denote its base change to $\overline{\bbF}_p$ by the same letter. Let $T\subset G$ be a maximal torus. We choose an Iwahori subgroup $\cB\subset G(\overline{\bbF}_p[[t]])$ whose projection to $G(\overline{\bbF}_p)$ contains $T$ and let $$\cG=G(\overline{\bbF}_p((t)))/G(\overline{\bbF}_p[[t]]) \hskip10pt {\rm resp.}\hskip10pt \cF=G(\overline{\bbF}_p((t)))/\cB$$ be the affine Grassmannian resp. the affine flag variety of $G$ over $\kee$. They are ind-schemes whose structure can be defined through the increasing union of Schubert varieties $S_w$ indexed by elements $w$ in the algebraic cocharacters $\Lambda$ of $T$ resp. in the affine Weyl group $W$ of $G$. By definition, $S_w$ equals the closure of the $\cB$-orbit through $w*$ and has a natural structure as finite dimensional projective variety over $\kee$. Let $\hT$ be the extension of $T$ by the 'turning torus' $\bG_m$ (which corresponds to the derivation element $d$ in the Kac-Moody setting). Then $\hT$ acts on $\cG$ and $\cF$ and preserves the stratification into Schubert varieties. Denote by $K'_{\hT}(S_w)$ the Grothendieck group of $\hT$-equivariant coherent module sheaves on $S_w$ and put $\KTs(\cdot):=K'_{\hT}(\cdot)\otimes\kee.$ We define $$\KTs(\cF):=\varinjlim_w \KTs(S_w)$$ and show that the Schubert classes $[\cO_{S_w}]$ provide an $\kee[\hT]$-basis for $\KTs(\cF)$. Here, $\kee[\hT]$ denotes the ring of algebraic functions on $\hT$ over $\kee$. The analogous result holds for $\cG$. Letting $\pi:\cF\rightarrow\cG$ denote the projection, the pull-back
$$ \pi^*: \KTs(\cG)\hookrightarrow  \KTs(\cF)$$
is a linear injection whose image can be described explicitly.
To each simple affine reflection $s$ corresponds a smooth Bruhat-Tits group scheme $P_s$ over $\overline{\bbF}_p[[t]]$ with connected fibres and with corresponding partial affine flag variety $\cF_s$. The projection $\pi_s:\cF\rightarrow\cF_s$ is a $\bP^1$-bundle. One obtains a Demazure operator $D_s:=\pi_s^*\pi_{s*}$ on $\KTs(\cF)$ in analogy to the classical case of the finite flag variety treated in \cite{DemazureSchubert}. The operators $D_s$ satisfy braid relations and quadratic relations which are exactly the ones appearing in the Iwahori-Matsumoto presentation of $\sH_{\kee}$. The latter is described in terms of the $\kee$-basis given by the characteristic functions $\tau_w$ of the double cosets $IwI$ in the classical way \cite{IwahoriMatsumoto}, however in our situation, the parameters satisfy $q_s=0$ for all $s$, since $\kee$ has characteristic $p$. Letting $\tau_s$ act through $D_s$ therefore turns $\KTs(\cF)$ into a right module over $\sH_{\kee[\hT]}=\sH_{\kee}\otimes \kee[\hT]$. In this situation, our main result says that mapping the function $(-1)^{\ell(w)}\tau_w$ to the Schubert class $[\cO_{S_w}]$ for each $w$ induces an isomorphism
 $$ \sH_{\kee[\hT]}\car \KTs(\cF)$$
  as right $\sH_{\kee[\hT]}$-modules. Here, $\ell$ denotes the length function on $W$. Finally, specialising the isomorphism at the unit element $\hT=1$ yields an isomorphism between $\sH_{\kee}$ and the absolute $K'$-theory $\bbK'(\cF)$ (with coefficients in $\kee$) as right $\sH_{\kee}$-modules. In particular, any irreducible right $\sH_{\kee}$-module appears as a quotient of $\bbK'(\cF)$. As a corollary, we show that the restriction of $\pi^*$ to the module spanned by Schubert classes supported on anti-dominant cocharacters $\Lambda_-$ realizes the regular representation for the spherical Hecke algebra $\kee[G(o_F)\setminus G(F)/G(o_F)]$ viewed inside $\sH_{\kee}$ via the mod $p$ Satake isomorphism \cite{HerzigSatake}. We remark that all our results hold in fact with $\kee$ replaced by any subfield $k\subseteq\kee$.

\vskip8pt

We expect all results to extend to arbitrary connected and split reductive groups. Moreover, replacing the Iwahori-subgroup $\cB\subset LG$ by the inverse image of the unipotent radical of the Borel subgroup of $G$ in question should extend all results from $\sH_{\kee}$ to $\sH_{\kee}^{(1)}$.
We leave these issues for future work.

\vskip8pt
To make a brief comparison with the complex case, we recall that over the complex numbers
the link between representations of the Iwahori-Hecke algebra and equivariant $K$-theory was discovered by Lusztig \cite{LusztigAMS} and occupies a central position in applications to the local Langlands program. We only mention the proof of the Deligne-Langlands conjecture by Kazhdan-Lusztig \cite{KLDeligneLanglands} where it is shown that the equivariant $K$-homology of the complex Steinberg variety of triples affords a natural Hecke action through $q$-analogues of Demazure operators and that this provides a model for the regular representation of the Hecke algebra.
We also point out that the theory of Demazure operators on equivariant $K$-theory of affine flag manifolds is well-established in the complex Kac-Moody setting, due to the work of Kostant-Kumar \cite{KoKu2} and Kashiwara-Shimozono \cite{KashiwaraFlag}.

\vskip8pt

{\it Acknowledgements:} The author would like to thank Elmar Gro\ss e-Kl\" onne and Marc Levine for some interesting conversations related to this topic.

\section{Algebraic loop groups and their affine flag varieties}

 We recall some definitions and concepts from \cite{Faltings}, \cite{PR_twisted}. Let $k$ be a field and let $K=k((t))$ be the field of formal Laurent series over $k$. Let $G$ be a simply connected semi-simple and simple group over $k$ which is split over $k$. Let $LG$ be its associated algebraic {\it loop group}, i.e. the functor $$R\mapsto LG(R)=G(R((t)))$$ on the category of $k$-algebras. It is representable by an ind-scheme over $k$.
 Let $L^+G$ be its associated {\it positive loop group} which is an affine group scheme over $k$ representing the functor $$R\mapsto L^+G(R)=G(R[[t]]).$$
 Let $\cB$ be the inverse image under the morphism ${\rm pr}: L^+G\rightarrow G, t\mapsto 0$ of a Borel subgroup in $G$. Thus, $\cB$ equals a closed subscheme of an infinite dimensional affine $k$-space. The {\it affine Grassmannian} $\cG$ resp. the {\it affine flag  variety} $\cF$ of $LG$ is the ind-scheme over $k$ $$\cG=LG/L^+G\hskip15pt {\rm resp.}\hskip15pt \cF=LG/\cB$$ representing the functor $R\mapsto LG(R)/L^+G(R)$ resp. $R\mapsto LG(R)/\cB(R).$ The natural projection map $\pi: \cF\rightarrow\cG$ is a proper and Zariski locally trivial fibration with fibers $L^+G/\cB\simeq G/{\rm pr}(\cB)$ (the split case in \cite[8.e.1]{PR_twisted}). The group $LG$ acts on $\cG$ and $\cF$ by left translations. Evaluating $G$ on the structure map $R\mapsto R((t))$ gives an inclusion $G\subset LG$. In this way, $G$ and all its subgroups act on $\cG$ and $\cF$.

 \vskip8pt

 For example, let $G={\rm SL}_d$, the special linear group. Then $LG$ equals the union of subfunctors $L_nG$ where $L_nG(R)$ is the set of matrices $M=\sum_{i\geq -n}M_it^{i}$
 of determinant $1$ with $M_i$ a $d\times d$-matrix with entries in $R$. Each $L_nG$ is representable as a closed subscheme of the affine scheme $\prod_{i\geq -n}\bA^{d^2}_k$
 and the inclusion $L_nG\subset L_{n+1}G$ is a closed embedding. This defines the ind-structure on $LG$. To make the ind-structure of $\cG$ and $\cF$ explicit, recall that
 a {\it lattice}  $\sL\subset R((t))^d$ is a $R[[t]]$-submodule of $R((t))^d$ such that there is $n\geq 0$ with $$t^{n}R[[t]]^d\subseteq\sL\subseteq t^{-n}R[[t]]^d$$ and such that the quotient $t^{-n}R[[t]]^d/\sL$ is a locally free $R$-module of finite rank. If $\bigwedge^d\sL=R[[t]]$, the lattice is called {\it special}. We denote the set of special lattices over $R$ by $\cL(R)$ and those for fixed $n$ by $\cL_n(R)$. Then $\cL_n$ is representable by a projective $k$-scheme. Indeed, the morphism $$\cL_n(R)\rightarrow {\rm Grass}_{nd}(t^{-n}k[[t]]^d/t^nk[[t]]^d)(R),\hskip5pt \sL\mapsto \sL/t^nR[[t]]^d$$ defines a closed embedding into the Grassmannian of $nd$-dimensional subspaces of the $2nd$-dimensional $k$-vector space
 $t^{-n}k[[t]]^d/t^nk[[t]]^d.$ Note that multiplication by $1+t$ induces a (unipotent) automorphism on the latter vector space which induces an automorphism of the Grassmannian. The image of $\cL_n$ equals its fixed point variety and hence, is indeed closed. Each element of $LG(R)$ gives rise to a special lattice in $R((t))^d$ by applying it to the standard lattice $R[[t]]^d$. Since $G(R[[t]])$ equals the stabilizer of the standard lattice, we obtain an isomorphism
 $ \cG(R)\simeq\cL(R)$. This defines the ind-structure of $\cG$ in the case of ${\rm SL}_d$. Now a {\it lattice chain} in $R((t))^d$ is a chain
 $$ \sL_0\supset \sL_1\supset\cdot\cdot\cdot\supset\sL_{d-1}\supset t\sL_0$$
 with lattices $\sL_i$ in $R((t))^d$ such that each quotient $\sL_i/\sL_{i+1}$ is a locally free $R$-module of rank $1$. Each element of $LG(R)$ gives rise to a lattice chain with $\sL_0$ special by applying it to the standard lattice chain
 $$ \sL_i:=\bigoplus_{j=1,...,d-i} R[[t]]e_{j} \oplus \bigoplus_{j=d-i+1,...,d}tR[[t]]e_{j}$$
 where $e_1,...,e_d$ denotes the standard basis of $R((t))^d$. Since $\cB(R)$ equals the stabilizer of the standard lattice chain, we obtain an isomorphism between
 $ \cF(R)$ and the space of lattice chains $\sL_\bullet$ in $R((t))^d$ such that $\sL_0$ is special. In this optic, the projection $\pi$ is given by sending a chain $\sL_\bullet$ to $\sL_0$ and, hence, is a fiber bundle whose fibres are all isomorphic to the usual flag variety $G/{\rm pr}(\cB)$ of $G$. This defines the ind-structure of $\cF$ in the case of ${\rm SL}_d$. For more details we refer to \cite{Goertz_Affine}.

 \vskip8pt

In the following, we are forced to work with a less explicit realization of the ind-structure on $\cF$. To define it requires more notation. Our assumptions on $G$ imply that $\cB(k)$ is an Iwahori-subgroup of $G(K)$ and hence, defines a fundamental chamber in the Bruhat-Tits building of $G(K)$. We denote by $B$ the associated smooth group scheme with connected fibres over $k[[t]]$. 
Then $\cB=L^+B$ in the notation of \cite[1.b]{PR_twisted}. Similarly, to any wall $F_i$ (i.e. facet of codimension $1$) of this chamber, we let $P_{i}$ be the smooth group scheme with connected fibres over $k[[t]]$ such that $P_i(k[[t]])$ is equal to the parahoric subgroup of $G(K)$ associated to $F_i$. To $P_{i}$ corresponds an infinite-dimensional affine group scheme $L^+P_{i}$ over $k$ with $L^+P_{i}(R)=P_{i}(R[[t]])$. The fpqc-quotient $\cF_i:=LG/L^+P_i$ is representable by an ind-scheme over $k$, the {\it partial affine flag variety} associated to $F_i$. The projection morphism \begin{numequation}\label{projection}\pi_i:\cF\rightarrow\cF_i
\end{numequation} is a principal $L^+P_i/L^+B$-bundle for the Zariski topology (the split case in \cite[Thm. 1.4]{PR_twisted}). Moreover, $L^+P_i/L^+B\simeq \bP^1_k$, the projective line over $k$, according to \cite[Prop. 8.7]{PR_twisted}. We denote the reflection in the wall $F_i$ by $s_i$.

\vskip8pt

Let $T$ be a maximal torus in $G$ contained in the Borel subgroup ${\rm pr}(\cB)$ and let $N_G(T)$ be its normalizer. Let $W_0=N_G(T)/T$ be the finite Weyl group of $G$ with longest element $w_0$. It may be viewed as the group generated by reflections in $\cS_0:=\{s_1,...,s_l\}.$
Let $\Phi=\Phi(G,T)$ be the root system and let $\Phi^+$ be the set of positive roots distinguished by ${\rm pr}(\cB)$. Let $\Lambda:=X_*(T)$ denote the group of cocharacters of $T$. We view $\Lambda$ as a subgroup of $T(k[[t]])$ via the mapping $\lambda\mapsto \lambda(t)$. In particular, $\Lambda$ acts on $\cG$ and $\cF$.

Let $W$ be the affine Weyl group of $G$. The group $W$ is an affine Coxeter group with set of generators $\cS:=\cS_0 \cup \{s_0\}$. We denote by $\leq$ the Bruhat-Chevalley (partial) order on $W$ and by $\ell(\cdot)$ the length function.
Since $G$ is simply connected, $\Lambda$ coincides with the coroot lattice whence $W$ may be written as the semi-direct product $W= \Lambda \rtimes W_0.$ When considering $\Lambda$ as a subgroup of $W$ (whose group law is written multiplicatively), we will use exponential notation.

\vskip8pt
Let $P$ be either $B$ or $P_i$ or $G$ and write for a moment $\cF_P=LG/L^+P$. Denote by $*=L^+P\in\cF_P$ the base point. Let $W^P$ be the set of representatives in $W$ of minimal length for the right cosets $W/W_P$ where $W_P\subset W$ equals the subgroup generated by $1$ or $s_i$ or $\cS_0$.
The {\it $(B,P)$-Schubert cell} $C_w=C_w(B,P)$ is the reduced subscheme

$$ L^+B w* \subset \cF_P.$$

The {\it $(B,P)$-Schubert variety} $S_w=S_w(B,P)$ is the reduced subscheme with underlying set
the Zariski closure of $C_w$ in $LG/L^+P$. It is a projective variety over $k$. Given $u\leq w$ there is a closed immersion $S_u\hookrightarrow S_w$ and
$$LG/L^+P=\bigcup_{w\in W^P} S_w$$ defines the ind-structure on $\cF_P$.
If we let $\cF_P^n$ be the (finite) union over the $S_w$ with $\ell(w)\leq n$, then $\cF_P=\cup_n \cF_P^n$ defines the ind-structure on $\cF_P$, too.
In case of $P=P_i$, we shall write $\cF_i=\cup_n \cF_i^n$. In the following, we sometimes drop the base point $*$ from the notation.

\vskip8pt

Let $\Delta=\{\alpha_1,..,\alpha_l\}$ be the set of simple roots in $\Phi$ associated with $\cS_0$ and let $\theta$ be the highest root. Denote by $\Phi^{\rm aff}$ the collection of affine roots $(\alpha,m)$ with $\alpha\in\Phi$ and $m\in\bbZ$. Then $\alpha_0:=(-\theta,1)$ is
the remaining affine simple root which induces the reflection $s_0$. For each affine root $(\alpha,m)$, there is an associated inclusion
$$\phi_{(\alpha,m)}: SL_2\hookrightarrow LG$$ defined as follows \cite[(9.8)]{PR_twisted}. The vector part $\alpha$ defines a $k$-morphism $SL_2\hookrightarrow G$ in the usual way which induces by functoriality $L(SL_2)\hookrightarrow LG$. The map $\phi_{(\alpha,m)}$ equals the composition with the morphism
$$ SL_2\longrightarrow L(SL_2), \left(
                                  \begin{array}{cc}
                                    a & b \\

                                    c & d \\
                                  \end{array}
                                \right)\mapsto  \left(
                                  \begin{array}{cc}
                                    a & bt^m \\

                                    ct^{-m} & d \\
                                  \end{array}
                                \right).$$

The image of $\phi_{\alpha_i}$ is contained in a Levi subgroup of the parabolic $L^+P_i\subset LG$ for $i=0,...,l$. There is an irreducible $k$-linear representation $V_i$ of the group scheme $L^+P_i$ whose pull-back via $\phi_{\alpha_i}$ equals the unique irreducible representation of $SL_2$ of dimension $2$. Let $LG\times^{L^+P_i}\bP(V_i)$ be the associated projective bundle over $\cF_i$. Let $v_i$ be a highest weight vector in $V_i$ and for $g\in LG$ denote by $[g,v_i]$ the class of $(g,kv_i)\in LG\times^{L^+P_i} \bP(V_i)$. We then have a morphism of $\bP^1$-bundles $\cF\rightarrow LG\times^{L^+P_i}\bP(V_i)$ given by $gL^+B\mapsto [g,v_i]$.
\begin{lemma}\label{lem-ident} The morphism $\cF\car LG\times^{L^+P_i}\bP(V_i)$ is an isomorphism.
\end{lemma}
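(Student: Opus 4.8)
The plan is to identify both sides over a common base and show the map $gL^+B\mapsto[g,v_i]$ is simultaneously injective on points and compatible with a covering by open affines, using that everything in sight is an $LG$-equivariant $\mathbb{P}^1$-bundle over $\cF_i$. First I would reduce to a statement over $\cF_i$: the projection $\pi_i:\cF\to\cF_i$ exhibits $\cF$ as a Zariski-locally trivial $\mathbb{P}^1$-bundle by \eqref{projection} and the isomorphism $L^+P_i/L^+B\simeq\mathbb{P}^1_k$, and by construction the morphism $\cF\to LG\times^{L^+P_i}\mathbb{P}(V_i)$ is a morphism of $\cF_i$-schemes (it sends $gL^+B$ to a point lying over $g L^+P_i$). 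Hence it suffices to check that it is an isomorphism fibrewise, i.e.\ that for each geometric point $gL^+P_i\in\cF_i$ the induced map of $\mathbb{P}^1$'s, from the fibre $L^+P_i/L^+B$ of $\pi_i$ to the fibre $\mathbb{P}(V_i)$, is an isomorphism; since a morphism between $\mathbb{P}^1$-bundles over a connected base that is an isomorphism on one fibre and a bundle morphism is an isomorphism, I may even argue at the single fibre over the base point $*$.

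Next I would pin down that fibre map. Over $*$ the fibre of $\pi_i$ is the flag variety $L^+P_i/L^+B\simeq P_i(k)/B_i$ for the reductive quotient, which for the parahoric $P_i$ is a copy of $\mathbb{P}^1$; concretely it is the $\phi_{\alpha_i}(SL_2)$-orbit through $*$. On the other side, the fibre of $LG\times^{L^+P_i}\mathbb{P}(V_i)$ over $*$ is $\mathbb{P}(V_i)$, and the map is $p\mapsto[p,v_i]$ for $p$ in the reductive quotient of $L^+P_i$. Since the pull-back of $V_i$ along $\phi_{\alpha_i}$ is the standard $2$-dimensional representation of $SL_2$, the composite $SL_2\xrightarrow{\phi_{\alpha_i}}L^+P_i\to\mathbb{P}(V_i)$, $g\mapsto[g,v_i]$, factors as $SL_2\to\mathbb{P}^1$, $g\mapsto g\cdot[v_i]$, where $[v_i]$ is the highest-weight line: this is exactly the standard transitive action of $SL_2$ on $\mathbb{P}^1$ with stabilizer the Borel. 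Thus the fibre map is $SL_2/B\to\mathbb{P}^1$, the canonical isomorphism. Comparing stabilizers — the stabilizer of $*$ in $\phi_{\alpha_i}(SL_2)$ is (the image of) the upper-triangular Borel, which is also the stabilizer of the line $kv_i$ — shows the fibre map is a bijective, hence (both sides smooth) an isomorphism.

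Putting it together: the morphism $\cF\to LG\times^{L^+P_i}\mathbb{P}(V_i)$ is an $\cF_i$-morphism of $\mathbb{P}^1$-bundles which is an isomorphism on every fibre, so it is an isomorphism. One can also phrase the fibrewise statement without choosing a point, by noting $LG$-equivariance: $LG$ acts transitively on $\cF_i$ and on the source and target compatibly, so it is enough to treat the fibre over $*$, which is the previous paragraph. I expect the main obstacle to be purely bookkeeping: verifying cleanly that the two candidate line-subbundles — the tautological one coming from $v_i$ and the one cut out by $\phi_{\alpha_i}$ from the fibre of $\pi_i$ — really are identified by the stated map on the nose (as opposed to up to an $L^+P_i$-automorphism), i.e.\ that $[v_i]$ is fixed exactly by $\mathrm{pr}(\cB)$-type data; this is where one must invoke that $V_i$ is \emph{irreducible} of the right highest weight (so the highest-weight line is unique) and that $\phi_{\alpha_i}(SL_2)$ lands in a Levi of $L^+P_i$ acting on $V_i$ through that $2$-dimensional factor. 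Granting those identifications, everything else is the standard "bundle map which is a fibrewise iso is an iso" argument.
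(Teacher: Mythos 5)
Your proposal is correct and follows essentially the same route as the paper: reduce by $LG$-equivariance to the fibre over the base point, and identify that fibre map via the stabilizer of the highest-weight line $kv_i$ in $L^+P_i$ being $L^+B$ together with irreducibility of $V_i$, so that $L^+P_i/L^+B\simeq\bP(V_i)$. The only difference is packaging: where you argue by hand that a morphism of $\bP^1$-bundles which is an isomorphism on fibres is an isomorphism, the paper simply cites \cite[IV.4.2.E(3)]{KumarFlagbook} for ``bijective on fibres implies isomorphism,'' which also takes care of the ind-scheme bookkeeping.
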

\Pf It suffices to check that the morphism is bijective on fibres \cite[IV.4.2.E(3)]{KumarFlagbook}. By $\LG$-equivariance we may check this over the base point. However, the stabilizer of $v_i$ in $L^+P_i$ equals $L^+B$ and since $V_i$ is irreducible, this implies $L^+P_i/L^+B \simeq \bP(V_i)$.
\qed

\vskip8pt

In \cite[p. 54]{Faltings} Faltings constructs a central extension
$$ 1\longrightarrow \bG_m\longrightarrow \bar{L}G\stackrel{\psi}{\longrightarrow} LG\longrightarrow 1$$
which acts on all line bundles on $\cF$ (denoted $\tilde{L}G$ in loc.cit.). The copy of $\bG_m$ corresponds to the central element $c$ in the Kac-Moody setting. We consider $T$ as a subgroup of $LG$ and consider its inverse image $\bar{T}$ in $\bar{L}G$, i.e. $\bar{T}:=\psi^{-1}(T).$ The central extension has a unique splitting over $L^+B$ and we regard $T$ as being contained in $\bar{T}$. There is a fundamental character $\rho_i\in X^*(\bar{T})$ whose restriction to $T$ gives the character of $V_i$.

\vskip8pt
On the other hand, there is a natural identification $$LG\times^{L^+P_i}\bP(V_i)\simeq \bP(LG\times^{L^+P_i}V_i)$$ and we have the family
of line bundles $\cO(n)$ for $n\in\bbZ$ on this space. Each character $\lambda\in X^*(\bar{T})$ yields an associated line bundle $\cL_\lambda=LG\times^{L^+B} k_\lambda$ on $\cF$ \cite[10.a.]{PR_twisted}. Here, $k_\lambda$ denotes the one-dimensional $\bar{T}$-representation induced by $\lambda$.

 \begin{lemma}\label{assertionII}
Let $0\leq i \leq l$. The pull-back of $\cO(-1)$ to $\cF$ equals $\cL_{\rho_i}$.
The $\cL_{\rho_i}$ form a basis of the Picard group $Pic(\cF)$ of $\cF$.
\end{lemma}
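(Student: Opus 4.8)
The statement falls into two parts --- the identification $\cO(-1)=\cL_{\rho_i}$ and the computation of $Pic(\cF)$ --- and I would treat them in turn. For the first part, combine Lemma \ref{lem-ident} with the identification $LG\times^{L^+P_i}\bP(V_i)\simeq\bP(E_i)$, where $E_i:=LG\times^{L^+P_i}V_i$, to regard $\cF$ as the projective bundle $\bP(E_i)$ over $\cF_i$ with structure map $\pi_i$; then $\cO(-1)$ is the tautological line subbundle of $\pi_i^*E_i$, so its fibre over the base point $*=L^+B$ is the line $kv_i\subset V_i$ spanned by the highest weight vector. It then remains to identify the character by which the stabilizer of $*$ acts on this line. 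Since the central extension $\bar{L}G$ acts on every line bundle of $\cF$, the bundle $\cO(-1)$ is $\bar{L}G$-equivariant and the relevant stabilizer is $\psi^{-1}(L^+B)$; this surjects onto $\bar{T}$ (the extension splits over $L^+B$), its unipotent part fixes $v_i$, and $\bar{T}$ acts on $kv_i$ through the highest weight of $V_i$, which is $\rho_i$ by definition. Hence $\cO(-1)\cong\bar{L}G\times^{\psi^{-1}(L^+B)}k_{\rho_i}=\cL_{\rho_i}$.

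For the second part I would first compute $Pic(\cF)$. The projection $\pi:\cF\to\cG$ is a Zariski-locally trivial fibration with fibre the finite flag variety $G/{\rm pr}(\cB)$, so $\pi^*:Pic(\cG)\to Pic(\cF)$ is injective, the restriction to a fibre is surjective (every line bundle on $G/{\rm pr}(\cB)$ is some $\cL_\lambda$, $\lambda\in X^*(T)$), and the exact sequence
$$0\lra Pic(\cG)\stackrel{\pi^*}{\lra}Pic(\cF)\lra Pic(G/{\rm pr}(\cB))\lra 0$$
is split by $\lambda\mapsto\cL_\lambda$. Using $Pic(\cG)\cong\bbZ$ (\cite{Faltings}) and $Pic(G/{\rm pr}(\cB))\cong X^*(T)\cong\bbZ^l$ this gives $Pic(\cF)\cong\bbZ^{l+1}$. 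To see that $[\cL_{\rho_0}],\dots,[\cL_{\rho_l}]$ is a $\bbZ$-basis it then suffices to evaluate the $l+1$ homomorphisms $\cM\mapsto\deg(\cM|_{S_{s_j}})$, $j=0,\dots,l$, on them, where $S_{s_j}=\overline{C_{s_j}}\simeq\bP^1$ is the Schubert curve through $s_j$, and to check that the resulting matrix lies in $\GL_{l+1}(\bbZ)$. Each $S_{s_j}$ is a $\bar{T}$-stable $\bP^1$ with fixed points $*$ and $s_j*$ on which $\bar{T}$ acts through $\alpha_j^\vee$, and the $\bar{T}$-weights of $\cL_{\rho_i}$ at these two points are, up to a uniform sign, $\rho_i$ and $s_j\rho_i$; hence $\deg(\cL_{\rho_i}|_{S_{s_j}})=\pm\langle\rho_i,\alpha_j^\vee\rangle=\pm\delta_{ij}$, since $\rho_i$ is the $i$-th fundamental weight. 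The diagonal sign can also be read off from the first part: $S_{s_i}$ is exactly the fibre of the $\bP^1$-bundle $\pi_i$ over $*$ (as $L^+P_i=L^+B\sqcup L^+Bs_iL^+B$), so $\cL_{\rho_i}$ restricts there to $\cO_{\bP^1}(-1)$. Either way the matrix is $\pm I_{l+1}$, so the $\cL_{\rho_i}$ form a basis of $Pic(\cF)$. One may also argue more directly that $\lambda\mapsto\cL_\lambda$ identifies $X^*(\bar{T})$ with $Pic(\cF)$ and that the fundamental weights $\rho_i$ form a basis of $X^*(\bar{T})$ (see also \cite{PR_twisted}).

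The substantive point is the first part: the bookkeeping with the Faltings extension, checking that the central $\bG_m\subset\bar{L}G$ acts on $v_i$ with precisely the weight contributing the central component of $\rho_i$ --- so that one obtains $\cL_{\rho_i}$ rather than the bundle attached only to $\rho_i|_T$ --- and that the sign is such that it is $\cO(-1)$, not $\cO(1)$, that corresponds to $\rho_i$. Granting this, together with the standard inputs $Pic(\cG)\cong\bbZ$ and the fundamentality of the $\rho_i$, the remainder is routine.
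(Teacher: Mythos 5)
Your treatment of the first assertion is essentially the paper's argument: via Lemma \ref{lem-ident} one views $\cF$ as $\bP(LG\times^{L^+P_i}V_i)$, and the fibre of the tautological bundle at $gL^+B$ is the line $[g,kv_i]$, which is by construction the fibre of $\cL_{\rho_i}=LG\times^{L^+B}k_{\rho_i}$; your extra bookkeeping with the Faltings extension and the splitting over $L^+B$ only makes explicit what the paper leaves implicit in the definition of $\rho_i\in X^*(\bar{T})$. For the second assertion the routes diverge: the paper simply quotes \cite[Prop.\ 10.1]{PR_twisted}, which says precisely that restriction to the lines $L^+P_i/L^+B\simeq\bP^1$ identifies $Pic(\cF)$ with $\bbZ^{l+1}$, so that the first assertion plus $\langle\rho_i,\alpha_j^\vee\rangle=\delta_{ij}$ immediately exhibits the $\cL_{\rho_i}$ as a basis. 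You instead recover $Pic(\cF)\cong\bbZ^{l+1}$ from the fibration $\pi:\cF\to\cG$, using $Pic(\cG)\cong\bbZ$ (Faltings) and $Pic(G/{\rm pr}(\cB))\cong X^*(T)$, and then run the same degree-matrix argument on the Schubert curves $S_{s_j}$; this is more self-contained in spirit and makes the splitting by $\lambda\mapsto\cL_\lambda$ visible, but the exactness of your sequence in the middle --- that a line bundle on $\cF$ trivial on every fibre of $\pi$ descends to $\cG$ --- is asserted rather than proved, and over the ind-scheme $\cG$ (with non-smooth Schubert strata) this descent is a genuine seesaw-type statement of roughly the same depth as the result the paper cites. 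So your argument is correct modulo that step, and your closing remark (identify $Pic(\cF)$ with $X^*(\bar T)$ via $\lambda\mapsto\cL_\lambda$, cf.\ \cite{PR_twisted}) is in effect the paper's actual route; the degree computation $\deg(\cL_{\rho_i}|_{S_{s_j}})=\pm\delta_{ij}$, including the observation that $S_{s_i}$ is the fibre of $\pi_i$ so the diagonal sign is $-1$, is a nice concrete complement that the paper does not spell out.
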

\begin{proof}

Consider the two-dimensional $L^+P_i$-representation $V_i$ of weight $\rho_i$ and highest weight vector $v_i\in V_i$ as above. Consider the line bundle $\cL_{\rho_i}=G\times^{L^+B} kv_i$ on $\cF$ and fix a point $gL^+B\in\cF$. The fibre of $\cL_{\rho_i}$ in $gL^+B$ equals $[g,v_i]$. Since  the isomorphism $\cF\simeq\bP(G\times^{L^+P_i}V_i)$ takes $gL^+B$ to $[g,v_i]$, we see that $\cL_{\rho_i}$ equals the pull-back of the tautological line bundle $\cO(-1)$. 
The second assertion follows from this together with \cite[Prop. 10.1]{PR_twisted}.
\end{proof}

\vskip8pt

We need to introduce yet another torus. Let $P$ be either $B$ or $P_i$ or $G$ and abbreviate $\cF_P=LG/L^+P$. For a given $k$-algebra $R$, its group of units $a\in R^\times$ acts on the rings $R[[t]]$ and $R((t))$ via $t^m\mapsto a^mt^m$ ('turning the loop'). This induces an action of $\bG_m$ on $LG$ preserving the subgroup $L^+P$. We obtain an action on $\cF_P$ which commutes with the action of $T$. We write $$\hT:=\bG_m\times T$$
for the extended torus. This copy of $\bG_m$ corresponds to the derivation element $d$ in the Kac-Moody setting. The action of the extended torus preserves the Schubert cell $C_w$. Indeed, we may write $w=w'e^\lambda$ with $w'\in W_0$ and $\lambda\in\Lambda$. Given $a\in R^\times$ one has $$a(w*)=a(w')a(\lambda(t))*=w'\lambda(at)*=w'\lambda(a)\lambda(t)*=\lambda(a)w*=w*$$
since $\lambda(a)\in T(R)$. Therefore $\hT$ indeed stabilizes $C_w$. It induces an action of $\hT$ on the Schubert variety $S_w$. Even more, $w*$ is the only fixpoint in $C_w$ for the action of $T$ and $\hT$. Namely, the cell $C_w$ is isomorphic to a finite direct sum over copies of 'root spaces' $\bA^1_k$ for $\hT$ indexed by affine roots $(\alpha,m)$ and with 'root vectors' $t^mX_{\alpha}$. The isomorphism is $\hT$-equivariant and maps $w*$ to the origin in $\oplus \bA^{1}_k$. Here, $X_\alpha$ denotes a root vector for the root group $U_{\alpha}\subset G$ and the torus $\hT$ acts on $t^mX_{\alpha}$ through the character $(a,s)\mapsto a^m\alpha(s)$, e.g. \cite[p. 46/53]{Faltings}.

\section{Equivariant $K$-theory}

\subsection{Schubert classes}

Let $P$ be either $B$ or $P_i$ or $G$ and let $\cF_P=LG/L^+P$.
Suppose for a moment that $S\subset \cF_P$ is a $\hT$-stable ind-subvariety which is, in fact, a finite-dimensional algebraic $k$-variety.
We then have the abelian category of $\hT$-equivariant coherent modules on $S$. We denote the corresponding Grothendieck group by $K'_{\hT}(S)$.
Let $f:Y\rightarrow S$ be an equivariant morphism to $S$ from another $\hT$-variety $Y$. If $f$ is flat, then pull-back of modules induces a homomorphism $f^*: K'_{\hT}(S)\rightarrow K'_{\hT}(Y)$. If $f$ is proper, there is a push-forward homomorphism $f_*: K'_{\hT}(Y)\rightarrow K'_{\hT}(S)$ induced by $\sum_j (-1)^jR^jf_*$ \cite[1.10/11]{Thomason_LefI}. Note that if $f$ is actually a closed embedding, then of course $R^jf_*=0$ for $j>0$. Applying this to the structure map of $S$, the tensor product endowes the group $K'_{\hT}(S)$ with a module structure for the group ring
$$\bbZ[\hT]:=\bbZ[X^*(\hT)]=K'_{\hT}(\Spec\; k).$$
Let $H\subseteq\hT$ be a closed subgroup. Restricting the $\hT$-action to $H$ induces a linear homomorphism
\begin{numequation}\label{merkur} \bbZ[H]\otimes_{\bbZ[\hT]} K'_{\hT}(\cF)\longrightarrow K'_{H}(\cF).\end{numequation}
It is often bijective \cite{MerkurjevNotes}.
\vskip8pt

We define the Grothendieck group of $\cF_P$ to be
$$K'_{\hT}(\cF_P):=
\varinjlim_n K'_{\hT}(\cF^n_P)= \varinjlim_{w\in W^P}  K'_{\hT}(S_w)$$
  in analogy with the case of the complex Kashiwara affine flag manifold ($k=\bbC$) treated in \cite[2.2.1]{VV10}. The transition maps here are induced by the push-forward along the $\hT$-equivariant closed embeddings $\cF^n_P\hookrightarrow\cF^{n+1}_P$ resp. $S_u\hookrightarrow S_{w}$ for $u\leq w$.

  \vskip8pt

  Let $U_P^n:=\cF_P^n\setminus\cF_P^{n-1}$ with $U_P^0=\Spec\; k$. We let $j_n: U_P^n\hookrightarrow \cF^n_P$ denote the corresponding open embedding.
  \begin{lemma} There is a linear isomorphism $K'_{\hT}(U_P^n)\simeq\oplus_{\ell(w)=n} \bbZ[\hT]$.
  \end{lemma}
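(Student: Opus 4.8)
The plan is to decompose the locally closed stratum $U_P^n = \cF_P^n \setminus \cF_P^{n-1}$ into its connected components, which are exactly the Schubert cells $C_w$ for $w \in W^P$ with $\ell(w) = n$, and to compute the equivariant $K'$-theory of each cell separately. First I would observe that $U_P^n = \coprod_{\ell(w)=n} C_w$ as a $\hT$-stable open subscheme of $\cF_P^n$: each $C_w = L^+B\, w{*}$ is locally closed in $\cF_P$, it is contained in $S_w \subset \cF_P^n$, and $S_w \setminus C_w$ is a union of lower-dimensional Schubert varieties, hence contained in $\cF_P^{n-1}$; conversely a point of $\cF_P^n$ not in $\cF_P^{n-1}$ lies in some $S_w$ with $\ell(w) = n$ but in no proper closed Schubert subvariety, so it lies in $C_w$. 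Since $K'_{\hT}$ of a disjoint union is the direct sum, it suffices to show $K'_{\hT}(C_w) \simeq \bbZ[\hT]$ for each $w$ with $\ell(w) = n$.

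For the computation on a single cell, I would invoke the $\hT$-equivariant isomorphism of $C_w$ with an affine space $\bigoplus \bA^1_k$ recorded at the end of Section~2: $C_w$ is a direct sum of root spaces $\bA^1_k$ indexed by the affine roots occurring in $w$, on which $\hT$ acts linearly through the characters $(a,s) \mapsto a^m \alpha(s)$, and $w{*}$ maps to the origin. Thus $C_w$ is a $\hT$-equivariant affine bundle (indeed a linear $\hT$-representation) over the point $\Spec k = \{w{*}\}$. I would then apply the homotopy invariance of equivariant $K'$-theory for such bundles — pull-back along an equivariant vector bundle projection $p : E \to Y$ induces an isomorphism $p^* : K'_{\hT}(Y) \car K'_{\hT}(E)$ — which gives $K'_{\hT}(C_w) \simeq K'_{\hT}(\Spec k) = \bbZ[X^*(\hT)] = \bbZ[\hT]$, with the class of the structure sheaf $[\cO_{C_w}]$ corresponding to $1$. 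Summing over the $w$ with $\ell(w) = n$ yields the asserted isomorphism $K'_{\hT}(U_P^n) \simeq \bigoplus_{\ell(w)=n} \bbZ[\hT]$.

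The main obstacle is justifying homotopy invariance in this equivariant, positive-characteristic setting: one needs that for a $\hT$-equivariant affine-space bundle (or, as here, a linear $\hT$-representation viewed as a variety over a point), the pull-back on $G$-equivariant $K'$-theory is an isomorphism. In the non-equivariant case this is Quillen's dévissage/homotopy theorem; the equivariant analogue is standard (see Thomason's work on equivariant $K$-theory, already cited as \cite{Thomason_LefI}, or the exposition \cite{MerkurjevNotes}), but since the bundle here is literally a $\hT$-representation the cleanest route is to filter $\bbA^n$-worth of root spaces one coordinate at a time and use the localization sequence relating $K'_{\hT}$ of a line bundle, its zero section, and its complement, together with the fact that the zero section inclusion and the projection are mutually inverse up to the $K$-theory of the $\bG_m$-bundle complement; an explicit induction on the number of root spaces then reduces everything to the single case $K'_{\hT}(\bA^1_k)$ with linear torus action, which is classical. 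I would also note in passing that the resulting $\bbZ[\hT]$-basis given by the cell classes $[\cO_{C_w}]$ is what later lets one identify, via the localization/excision sequences for the pairs $(\cF_P^n, \cF_P^{n-1})$, the Schubert classes $[\cO_{S_w}]$ as a $\bbZ[\hT]$-basis of $K'_{\hT}(\cF_P)$.
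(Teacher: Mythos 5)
Your argument is correct and follows the same route as the paper: decompose $U_P^n$ into the Schubert cells $C_w$ with $\ell(w)=n$, use the $\hT$-equivariant linear identification $C_w\simeq\bA_k^{\ell(w)}$ from the end of Section~2, and conclude by homotopy invariance of equivariant $K'$-theory (the paper simply cites Thomason for this, rather than re-deriving it by the coordinate-by-coordinate localization argument you sketch). No substantive difference.
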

  \Pf We have $U_P^n=\coprod_{\ell(w)=n} C_w$ with a {\it linear} $\hT$-action on $C_w\simeq\bA_k^{\ell(w)}$, cf. discussion at the end of previous section. So the claim follows from homotopy invariance of $K'_{\hT}$ \cite[4.2]{Thomason_gpscheme}. \qed
  \vskip8pt
  We have the element $[\cO_{S_w}]$ in $K'_{\hT}(\cF_P)$ for $w\in W^P$. In the complex Kac-Moody setting these Schubert classes give rise to
  '$\bbZ$-bases' in the corresponding $K$-theories of the affine flag manifold \cite{KashiwaraFlag}, \cite{KoKu2}. In the following, we are rather interested in certain fields of positive characteristic $p>0$ where we are not aware of corresponding facts in the literature. The following results have straightforward proofs and are sufficient for our purposes.
  We let $$\bbK'_H(\cdot):=K'_{H}(\cdot)\otimes_{\bbZ} k\hskip10pt {\rm and }\hskip10pt k[\cdot]:=\bbZ[\cdot]\otimes_{\bbZ} k$$ 
  for any closed subgroup $H\subseteq\hT$. If $H=1$, we write $\bbK'$ instead of $\bbK'_{\{1\}}$. We denote by $\overline{\bbF}_p$ the algebraic closure
  of the finite field $\bbF_p$ with $p$ elements.
  \begin{prop}\label{prop-verify-merkur} Let $k\subseteq\overline{\bbF}_p$. The $k[\hT]$-module $\KTs(\cF^n_P)$ is free on the basis given by the Schubert classes $[\cO_{S_w}]$ with $\ell(w)=n$. If $H\subseteq \hT$ is closed subgroup, then {\rm (\ref{merkur})} is an isomorphism after tensoring with $k$.
  \end{prop}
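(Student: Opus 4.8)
The plan is to prove the freeness statement by induction on $n$, for a fixed $P$, using the stratification $\cF_P^n=\cF_P^{n-1}\amalg U_P^n$ with $U_P^n=\coprod_{\ell(w)=n}C_w$ together with Thomason's localization sequence. The base case $n=0$ is immediate: $\cF_P^0=\Spec k$ (the base point), so $\KTs(\cF_P^0)=k[\hT]$ is free on $[\cO_{S_e}]=1$. For the inductive step write $i\colon\cF_P^{n-1}\hra\cF_P^n$ for the closed immersion and $j_n\colon U_P^n\hra\cF_P^n$ for the complementary open immersion. Thomason's theorem \cite{Thomason_LefI} gives the exact sequence
$$K'_{\hT,1}(\cF_P^n)\xrightarrow{j_n^*}K'_{\hT,1}(U_P^n)\xrightarrow{\partial}K'_{\hT}(\cF_P^{n-1})\xrightarrow{i_*}K'_{\hT}(\cF_P^n)\xrightarrow{j_n^*}K'_{\hT}(U_P^n)\lra 0,$$
and since $K'_{\hT}(U_P^n)\simeq\bigoplus_{\ell(w)=n}\bbZ[\hT]$ is $\bbZ$-free by the preceding lemma, everything stays exact after $-\otimes_\bbZ k$. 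The induction hinges on two computations of Schubert classes. First, $u\leq w$ together with $\ell(u)=\ell(w)$ forces $u=w$, whence $S_w\cap U_P^n=C_w$ when $\ell(w)=n$ and $S_w\cap U_P^n=\emptyset$ when $\ell(w)<n$; therefore $j_n^*[\cO_{S_w}]=0$ for $\ell(w)<n$, while for $\ell(w)=n$ one gets $j_n^*[\cO_{S_w}]=[\cO_{C_w}]$, which is exactly the free generator of $\KTs(U_P^n)$ attached to $w$ (the image of $1$ under homotopy invariance). Second, $i_*[\cO_{S_w}]=[\cO_{S_w}]$ for $\ell(w)\leq n-1$, since $i$ is a closed immersion. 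Granting that $i_*$ is injective, the right-hand sequence is short exact with $k[\hT]$-free cokernel, hence splits, and the classes $[\cO_{S_w}]$ with $\ell(w)=n$ furnish a splitting; since $\KTs(\cF_P^{n-1})$ is free on $\{[\cO_{S_w}]:\ell(w)\leq n-1\}$ by induction and $i_*$ carries this basis to $\{[\cO_{S_w}]:\ell(w)\leq n-1\}\subset\KTs(\cF_P^n)$, we conclude
$$\KTs(\cF_P^n)=i_*\KTs(\cF_P^{n-1})\ \oplus\ \bigoplus_{\ell(w)=n}k[\hT]\,[\cO_{S_w}]$$
is free on $\{[\cO_{S_w}]:\ell(w)\leq n\}$. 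Taking the colimit over $n$ gives the statement for $\cF_P$, hence for $\cF=\cF_B$.

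The genuinely non-formal point — the main obstacle — is the injectivity of $i_*$, equivalently the vanishing of $\partial$, equivalently the surjectivity of $j_n^*\colon K'_{\hT,1}(\cF_P^n)\to K'_{\hT,1}(U_P^n)$. I would establish the latter by writing down explicit preimages. By homotopy invariance \cite{Thomason_gpscheme}, $K'_{\hT,1}(U_P^n)=\bigoplus_{\ell(v)=n}K'_{\hT,1}(\Spec k)$, and since $\hT$ is diagonalizable (so that $\mathrm{Rep}_k\hT$ is semisimple with simple objects the one-dimensional $k_\chi$) the group $K'_{\hT,1}(\Spec k)$ is generated by the classes of the pairs $(k_\chi,c)$, $\chi\in X^*(\hT)$, $c\in k^\times$, where $c$ stands for the automorphism "multiplication by $c$". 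Now for $\ell(v)=n$ the $\hT$-equivariant coherent sheaf $\cO_{S_v}\otimes k_\chi$ on $\cF_P^n$ (that is, $\cO_{S_v}$ with its linearization twisted by $\chi$), equipped with the automorphism $c$, defines a class in $K'_{\hT,1}(\cF_P^n)$ whose image under $j_n^*$ is — using $\cO_{S_v}|_{U_P^n}=\cO_{C_v}$ and $\cO_{S_v}|_{C_u}=0$ for $u\neq v$ with $\ell(u)=n$ — the generator $(k_\chi,c)$ in the $v$-th summand and zero in the others. As $v,\chi,c$ vary these exhaust a generating set of $K'_{\hT,1}(U_P^n)$, so $j_n^*$ is surjective and $\partial=0$; injectivity of $i_*$ is then preserved by $-\otimes_\bbZ k$ because the cokernel $K'_{\hT}(U_P^n)$ is $\bbZ$-free.

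Finally, for a closed subgroup $H\subseteq\hT$: $H$ is diagonalizable, the cells $C_w$ carry linear $H$-actions, and the restriction $X^*(\hT)\twoheadrightarrow X^*(H)$ is surjective, so every step above applies verbatim with $\hT$ replaced by $H$; thus $\bbK'_H(\cF_P^n)$ is $k[H]$-free on the Schubert classes as well. To identify {\rm(\ref{merkur})} one may either note directly that its source $k[H]\otimes_{k[\hT]}\KTs(\cF_P^n)$ and its target $\bbK'_H(\cF_P^n)$ are both $k[H]$-free on $\{[\cO_{S_w}]:\ell(w)\leq n\}$ with the map sending basis to basis, or apply $k[H]\otimes_{k[\hT]}-$ to the short exact localization sequence for $\hT$ (exactness is preserved since $\KTs(U_P^n)$ is $k[\hT]$-free) and compare with the one for $H$ by a five-lemma induction on $n$. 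Passing to the colimit over $n$, {\rm(\ref{merkur})}$\otimes k$ is an isomorphism for $\cF=\cF_B$.
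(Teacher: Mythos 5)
Your argument is correct, and its skeleton --- induction on $n$, Thomason's localization sequence for $\cF_P^{n-1}\subset\cF_P^n$ with open complement $U_P^n=\coprod_{\ell(w)=n}C_w$, homotopy invariance on the cells, and the classes $[\cO_{S_w}]$ as explicit lifts of the unit vectors of $K'_{\hT}(U_P^n)$ --- is the same as the paper's. Where you genuinely diverge is at the single non-formal point, the left-exactness in degree zero. The paper never analyzes $K'_1$ of the total space: it observes that, by homotopy invariance, the first equivariant $K'$-group of $U_P^n$ is a sum of copies of $\bbZ[H]\otimes_{\bbZ}k^\times$, which vanishes after $\otimes_{\bbZ}k$ because $k^\times$ is torsion of order prime to $p$ for $k\subseteq\overline{\bbF}_p$; hence the localization sequence tensored with $k$ is already short exact, and the whole induction (including the case of a closed subgroup $H\subseteq\hT$, which is run in parallel) takes place with $k$-coefficients from the start. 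You instead prove that the connecting map vanishes integrally, by lifting the generators $(k_\chi,c)$ of $K'_{\hT,1}(U_P^n)$ --- using semisimplicity of the representation category of the diagonalizable group $\hT$ and $K_1(k)=k^\times$ --- to the automorphism classes $(\cO_{S_v}\otimes k_\chi,\,c\cdot\mathrm{id})$ on $\cF_P^n$, and your computation of $j_n^*$ on these classes is right since $S_v\cap U_P^n=C_v$. This buys a genuinely stronger conclusion: freeness of $K'_{\hT}(\cF_P^n)$ over $\bbZ[\hT]$ on the Schubert classes for an arbitrary field $k$ (the hypothesis $k\subseteq\overline{\bbF}_p$ is never used in your argument; in the paper it is essential at exactly this spot), in line with the integral bases known in the complex Kac--Moody setting; the price is that you must invoke that $K_1$ of a semisimple abelian category is the sum of $K_1$ of the endomorphism fields of the simples, realized by automorphism classes, and that the pair-class construction is compatible with the exact functors $j_n^*$ and flat pullback --- correct, but these deserve a precise reference. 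Two small points: the remark that ``everything stays exact after $\otimes_{\bbZ}k$'' applied to the five-term sequence is not justified as stated, but it is harmless because you only ever use exactness after establishing $\partial=0$, where $\bbZ$-freeness of $K'_{\hT}(U_P^n)$ does give a splitting, as you note; and you correctly read the basis in the statement as $\{[\cO_{S_w}]:\ell(w)\leq n,\ w\in W^P\}$, which is what the subsequent corollary requires. Your treatment of {\rm(\ref{merkur})} (rerun the argument for $H$ and match bases) is exactly the paper's.
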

 \Pf We have $\bbZ[H]\otimes_\bbZ K^{'\bullet}=K_{H}^{'\bullet}$ for those $H$-varieties which have trivial $H$-action and where $K^{'\bullet}$ denotes ordinary $K'$-theory.
  Applying this to $\bullet=1$ and using
  $$\bbK^1(\Spec\; \ke)={\ke}^\times\otimes_{\bbZ} {\ke}=0,$$ the first equivariant $K'$-group of $U_P^n=\coprod_{\ell(w)=n} C_w$ thus vanishes upon tensoring with $\ke$.  The localization sequence, tensored with $\ke$,
  $$  0 \longrightarrow\bbK'_H(\cF^{n-1}_P)\longrightarrow \bbK'_H(\cF^n_P)\stackrel{j_n^*}{\longrightarrow}\bbK'_H(U^n_P)\longrightarrow 0$$
  is therefore exact \cite[2.7]{Thomason_gpscheme}. Since a preimage under $j_n^*$ of the unit vector at the $w$-th position in $\oplus_{\ell(w)=n} \bbZ[H]$ is given by $[\cO_{S_w}]$, the first assertion follows now by induction on $n$ with $H=\hT$. The bijectivity of (\ref{merkur}) follows now easily from this. \qed

  \vskip8pt
  \begin{cor}\label{cor-basis} Let $k\subseteq\overline{\bbF}_p$. The $k[\hT]$-module $\KTs(\cF_P)$ is free on the basis given by the $[\cO_{S_w}]$ with $w\in W^P$.
  \end{cor}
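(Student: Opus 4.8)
Corollary \ref{cor-basis} should follow formally from Proposition \ref{prop-verify-merkur} by passing to the limit. The plan is to recall that $\KTs(\cF_P) = \varinjlim_n \KTs(\cF_P^n)$, where the transition maps are the push-forwards along the closed embeddings $\cF_P^{n-1}\hookrightarrow\cF_P^n$, and to assemble the filtration-compatible bases produced by the proposition into a basis of the colimit. First I would note that, by Proposition \ref{prop-verify-merkur} (applied with $H=\hT$), for each $n$ the $k[\hT]$-module $\KTs(\cF_P^n)$ is free with basis $\{[\cO_{S_w}] : \ell(w)=n,\ w\in W^P\}$; more precisely, running the induction in the proof of that proposition shows $\KTs(\cF_P^n)$ is free on $\{[\cO_{S_w}] : w\in W^P,\ \ell(w)\leq n\}$, since the localization sequence
$$0\longrightarrow \KTs(\cF_P^{n-1})\longrightarrow \KTs(\cF_P^n)\stackrel{j_n^*}{\longrightarrow}\KTs(U_P^n)\longrightarrow 0$$
is exact and the $[\cO_{S_w}]$ with $\ell(w)=n$ lift the standard basis of $\KTs(U_P^n)$ while those with $\ell(w)<n$ give, by induction, a basis of the sub $\KTs(\cF_P^{n-1})$.

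The second step is to observe that the transition map $\KTs(\cF_P^{n-1})\to\KTs(\cF_P^n)$ sends $[\cO_{S_w}]$ to $[\cO_{S_w}]$ for $w\in W^P$ with $\ell(w)\leq n-1$: indeed $S_w\hookrightarrow\cF_P^{n-1}\hookrightarrow\cF_P^n$ are closed embeddings, so the push-forward of the structure sheaf along the composite is again the structure sheaf of $S_w$ regarded as a closed subscheme of $\cF_P^n$, with no higher direct images. Hence the transition maps are injective and identify the basis of $\KTs(\cF_P^{n-1})$ with part of the basis of $\KTs(\cF_P^n)$. The third step is the purely categorical fact that a filtered colimit of free modules along injective maps that carry basis elements to basis elements is itself free, with basis the union (here the nested union over $n$) of the bases; applying this gives that $\KTs(\cF_P)=\varinjlim_n\KTs(\cF_P^n)$ is free over $k[\hT]$ on $\{[\cO_{S_w}] : w\in W^P\}$, where we use that $W^P=\bigcup_n\{w\in W^P : \ell(w)\leq n\}$.

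There is essentially no hard obstacle here; the only point requiring a little care is the compatibility of the classes $[\cO_{S_w}]$ under the transition maps, i.e.\ checking that the closed immersions in play are acyclic for push-forward so that the colimit basis is indexed cleanly by $W^P$ rather than by some reindexing. Once that bookkeeping is in place the statement is immediate. I would write the proof in two or three sentences, citing Proposition \ref{prop-verify-merkur} for the finite-level statement and the remark just before Lemma \ref{lem-ident}'s $K$-theoretic analogue (that closed immersions have $R^jf_*=0$ for $j>0$) for the compatibility, and conclude by taking the colimit.
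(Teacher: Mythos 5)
Your argument is correct and is exactly the intended deduction: the paper leaves the corollary without a written proof precisely because it follows from Proposition \ref{prop-verify-merkur} (read, as you rightly do, as giving the basis $\{[\cO_{S_w}]:w\in W^P,\ \ell(w)\leq n\}$ at each finite level, which is what the induction in its proof actually produces) together with the compatibility $[\cO_{S_w}]\mapsto[\cO_{S_w}]$ under the closed-immersion push-forward transition maps and passage to the colimit. Nothing in your write-up deviates from this, so it matches the paper's approach.
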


\vskip8pt
Remark: The ind-structure on $\cF_P$ may also be defined via the $(P,P)$-Schubert varieties equal to the closure of the $L^+P$-orbits in $\cF_P$ \cite{PR_twisted}. Since an $L^+P$-orbit is not a topological cell anymore, we chose $(B,P)$-Schubert varieties to calculate the equivariant $K'$-theory of $\cF_P$.
  \vskip8pt
 For each $n$ the set of fixpoints $(\cF^n_P)^{\hT}$ is a non-empty closed subvariety of $\cF_P^n$. We let $\iota_n: (\cF^n_P)^{\hT}\hookrightarrow\cF_n$ be the corresponding closed embedding. We have the following simple version of the Thomason concentration theorem for fixed points varieties \cite[Thm. 2.1]{ThomasonDuke}.

 \begin{prop}\label{cor-iso}
 Let $k\subseteq\overline{\bbF}_p$.
The variety $(\cF_P^n)^{\hT}$ equals the union of the finitely many points $w*\in\cF_P$ for $\ell(w)\leq n$ and $w\in W^P$.
One has a linear isomorphism $$\iota_{n*}:\KTs((\cF_P^n)^{\hT}) \car\KTs(\cF_P^n)$$ which maps the class of the point $w*$ to $[\cO_{S_w}]$.

\end{prop}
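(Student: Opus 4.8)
The plan is to reduce everything to the two facts already established: first, that the fixed-point set of $\hT$ on each Schubert cell $C_w$ consists of the single point $w*$ (the final paragraph of Section 2), and second, that Proposition \ref{prop-verify-merkur} gives the Schubert classes as a $k[\hT]$-basis of $\KTs(\cF_P^n)$. I would first identify $(\cF_P^n)^{\hT}$ as a set. Since $\cF_P^n=\coprod_{\ell(w)\le n,\, w\in W^P} C_w$ as a set and each $C_w$ is $\hT$-stable with a linear $\hT$-action for which the unique fixed point is the origin $w*$, one gets $(\cF_P^n)^{\hT}=\{w*: \ell(w)\le n,\ w\in W^P\}$; this is a finite disjoint union of reduced points, hence $\KTs((\cF_P^n)^{\hT})=\bigoplus_{\ell(w)\le n} k[\hT]$, with the $w$-th summand generated by the class of the structure sheaf of the point $w*$.

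Next I would compute $\iota_{n*}$ on each generator. Factoring the closed embedding $\{w*\}\hookrightarrow (\cF_P^n)^{\hT}\hookrightarrow S_w\hookrightarrow \cF_P^n$, functoriality of push-forward along closed immersions reduces the computation of $\iota_{n*}[\cO_{w*}]$ to computing the image of the point class under $\{w*\}\hookrightarrow S_w$, followed by the inclusion $S_w\hookrightarrow \cF_P^n$ (which by definition of the transition maps sends $[\cO_{S_w}]$ to $[\cO_{S_w}]$). So the crux is to show that in $\KTs(S_w)$ the class of the structure sheaf of the base point $w*$ coincides with the class $[\cO_{S_w}]$ of the structure sheaf of all of $S_w$. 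The Schubert variety $S_w$ is $\hT$-equivariantly contractible onto the fixed point $w*$ via the contracting $\bG_m$-action supplied by the cocharacter into $\hT$ that acts with positive weights on every affine root space (the root-space description at the end of Section 2 shows $C_w\cong \bigoplus\bA^1_k$ with strictly positive weights for a suitable cocharacter, and this extends to the projective variety $S_w$). Equivalently, I would filter $\cO_{S_w}$ by the subschemes $S_u$ for $u\le w$ in $W^P$ and induct on $\ell(w)$: the Schubert cell $C_w=S_w\setminus\bigcup_{u<w}S_u$ is an affine space with linear $\hT$-action, so by homotopy invariance $[\cO_{S_w}]-\sum$(boundary terms) maps to the unit vector at position $w$ under the localization identification, exactly as $[\cO_{w*}]$ does; matching with Proposition \ref{prop-verify-merkur}'s description of $j_n^*[\cO_{S_w}]$ then forces $\iota_{n*}[\cO_{w*}]=[\cO_{S_w}]$ modulo classes supported on $\cF_P^{n-1}$, and induction closes the gap.

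Finally, bijectivity of $\iota_{n*}$ is then formal: it sends the basis $\{[\cO_{w*}]\}_{\ell(w)\le n}$ of $\KTs((\cF_P^n)^{\hT})$ to the set $\{[\cO_{S_w}]\}_{\ell(w)\le n}$, which by Proposition \ref{prop-verify-merkur} (applied for each $m\le n$ and assembled, or directly since the $[\cO_{S_w}]$ with $\ell(w)\le n$ form a $k[\hT]$-basis of $\KTs(\cF_P^n)$) is a basis of $\KTs(\cF_P^n)$; a $k[\hT]$-linear map carrying a basis bijectively to a basis is an isomorphism. The main obstacle I anticipate is the identity $[\cO_{w*}]=[\cO_{S_w}]$ in $\KTs(S_w)$: one must be careful that the contracting torus action on the possibly singular projective variety $S_w$ genuinely gives a deformation retraction onto $w*$ at the level of $K'$-theory, and the cleanest route is probably the inductive localization argument above rather than invoking a Bialynicki-Birula style statement in this equivariant, positive-characteristic, possibly-singular setting.
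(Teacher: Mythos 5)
Your own diagnosis of the crux is accurate: everything hinges on the identity $[\cO_{w*}]=[\cO_{S_w}]$ in $\KTs(S_w)$ (or in $\KTs(\cF_P^n)$), and this is exactly where your argument breaks, because neither of your two routes to it is valid. (a) The contraction argument has no $K'$-theoretic content: homotopy invariance is invariance of $K'_{\hT}$ under \emph{flat pullback} along affine-space bundles; an attracting $\bG_m$-action on the (possibly singular) projective variety $S_w$ does not identify the \emph{pushforward} class of the fixed point with the class of the structure sheaf, and no Bialynicki-Birula type statement yields such an identification. (b) The localization matching is based on a miscomputation: the point $w*$ lies in the open cell $C_w\cong\bA^{\ell(w)}_k$, on which $\hT$ acts linearly with nontrivial characters $\chi_1,\dots,\chi_{\ell(w)}$, so the Koszul resolution of the skyscraper gives $j_n^*\bigl(\iota_{n*}[\cO_{w*}]\bigr)=\prod_i(1-e^{-\chi_i})\cdot[\cO_{C_w}]$ under the identification $\KTs(C_w)\simeq k[\hT]\cdot[\cO_{C_w}]$, whereas $j_n^*[\cO_{S_w}]=[\cO_{C_w}]$ is the unit vector. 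Since each $1-e^{-\chi_i}$ is a non-unit of $k[\hT]$ (coefficients of characteristic $p$ do not change this), the two classes do \emph{not} have the same image on the open stratum, so your claim that both restrict to the unit vector, and hence the induction and the final ``basis goes to basis'' step, collapse. Concretely, already for $S_{s_i}\cong\bP^1$ the skyscraper class at $s_i*$ equals $[\cO_{\bP^1}]$ minus a character times $[\cO(-1)]$, which differs from $[\cO_{\bP^1}]$ in $\KTs(\bP^1)$; the identity you need is false for every $w$ with $\ell(w)>0$. (A smaller slip: $(\cF_P^n)^{\hT}$ is not contained in $S_w$, so your factorization of the embedding should be $\{w*\}\hookrightarrow S_w\hookrightarrow\cF_P^n$.)

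For comparison, the paper's proof does not attempt your identity at all: it plays the localization sequence of $(\cF_P^n)^{\hT}$ against that of $\cF_P^n$ via flat equivariant base change and induction on $n$, with all the weight carried by the claim that the map on the open stratum, $\tilde{\iota}_{n*}:\KTs((U_P^n)^{\hT})\to\KTs(U_P^n)$, is an isomorphism ``by homotopy invariance''. The computation above shows that this map is multiplication by the Euler classes $\prod_i(1-e^{-\chi_i})$, hence injective but not surjective over $k[\hT]$; an isomorphism of concentration type is only available after inverting such elements, as in Thomason's theorem. So the obstacle you flagged is not an artifact of your approach but the genuinely delicate point of the statement itself, and it cannot be closed either by your contraction/localization arguments or by transcribing the paper's homotopy-invariance step; any correct treatment has to confront the Euler-class factors (e.g.\ by localizing $k[\hT]$ at them, or by reformulating which classes the fixed points are asserted to hit).
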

\Pf One has $$(\cF_P^n)^{\hT}=\coprod_{\ell(w)\leq n} (C_w)^{\hT}=\coprod_{\ell(w)\leq n}\{w*\}$$ by the discussion at the end of the previous section. For the second assertion we consider the cartesian diagramm

$$\xymatrix{
(U_P^n)^{\hT}\ar[r]^{j_n} \ar[d]^{\tilde{\iota}_n} & (\cF_P^n)^{\hT} \ar[d]^{\iota_n}\\
  U_P^n\ar[r]^{j_n}& \cF_P^n.
}$$
Flat equivariant base change for equivariant $K'$-theory
yields $j_{n}^*\iota_{n*}=\tilde{\iota}_{n*}j_n^*$
 \cite[1.11]{Thomason_LefI}. We obtain a commutative diagram

 $$\xymatrix{
0\ar[r]  & \KTs((\cF_P^{n-1})^{\hT}) \ar[r] \ar[d]^{\iota_{n-1*}} & \KTs((\cF_P^n)^{\hT}) \ar[r] \ar[d]^{\iota_{n*}} & \KTs((U_P^n)^{\hT}) \ar[r] \ar[d]_{\simeq}^{\tilde{\iota}_{n*}} & 0\\
 0\ar[r] & \KTs(\cF_P^{n-1}) \ar[r] &  \KTs(\cF_P^n)\ar[r] & \KTs(U_P^n)\ar[r] & 0.
}$$
where the left-hand square commutes by functoriality for proper maps \cite[1.11]{Thomason_LefI} and the right-hand isomorphism comes from homotopy invariance. The lower horizontal sequence is exact by the proof of the preceding proposition and this implies the exactness of the upper horizontal sequence. The result follows now by induction on $n$.
\qed

  \subsection{Demazure operators}
We fix a number $n$ and consider the preimage of $\cF^n_i$ under the projection $\pi_i: \cF\rightarrow\cF_i$ from \ref{projection}. Since $\pi_i$ is flat and proper we have the homomorphisms
$$\xymatrix{
 K'_{\hT}(\pi_i^{-1}\cF^n_i) \ar@<1ex>[r]^>>>{\pi_{i*}}
&  K'_{\hT}(\cF^n_i) \ar@<1ex>[l]^<<<{\pi_i^*}
}.$$
In analogy to the case of the finite flag variety \cite{DemazureSchubert} and the complex Kashiwara affine flag manifold \cite{KashiwaraFlag}, we define a linear operator on $K'_{\hT}(\pi_i^{-1}\cF^n_i)$ by
$$ D_i:=\pi_i^*\circ\pi_{i*}.$$
\begin{lemma}\label{lem-comp}
The operator $D_i$ extends to an operator on $K'_{\hT}(\cF).$
\end{lemma}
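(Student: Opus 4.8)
The plan is to show that the operators $D_i$ are compatible with the transition maps defining $K'_{\hT}(\cF)=\varinjlim_n K'_{\hT}(\cF^n)$, so that they glue to an operator on the direct limit. Concretely, fix $n$ and write $\cF^{[n]}:=\pi_i^{-1}\cF^n_i$ for the preimage, which is a $\hT$-stable finite-dimensional closed subvariety of $\cF$. First I would check that the inclusion $\cF^n\hookrightarrow\cF^{[n]}$ holds, i.e. that $\pi_i(\cF^n)\subseteq\cF^n_i$: since $\pi_i$ is a $\bP^1$-bundle it does not decrease dimension in a controlled way, but on Schubert varieties it sends $S_w(B,B)$ onto $S_{w'}(B,P_i)$ where $w'\in W^{P_i}$ is the minimal-length representative of $wW_{P_i}$, and $\ell(w')\le\ell(w)$; hence $\pi_i(\cF^n)\subseteq\cF^n_i$ and $\cF^n\subseteq\cF^{[n]}$. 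I would also note $\cF^{[n]}\subseteq\cF^{n+1}$, since $\pi_i^{-1}S_{w'}$ for $\ell(w')\le n$ is covered by Schubert varieties $S_w$ with $\ell(w)\le n+1$ (the fibre adds at most one to the length). Thus the $\cF^{[n]}$ are cofinal among the finite subvarieties exhausting $\cF$.

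Next I would verify that $D_i$ on $K'_{\hT}(\cF^{[n]})$ is compatible with the embeddings $\cF^{[n]}\hookrightarrow\cF^{[n+1]}$. The square
\[
\xymatrix{
\cF^{[n]} \ar[r] \ar[d]_{\pi_i} & \cF^{[n+1]} \ar[d]^{\pi_i}\\
\cF^n_i \ar[r] & \cF^{n+1}_i
}
\]
is cartesian, with horizontal arrows closed $\hT$-equivariant embeddings and vertical arrows flat (and proper). Flat base change \cite[1.11]{Thomason_LefI} gives $\pi_i^*\,\iota_* = \iota'_*\,\pi_i^*$ for the push-forward $\iota_*$ along the bottom embedding, and functoriality of proper push-forward gives $\pi_{i*}\,\iota'_* = \iota_*\,\pi_{i*}$ for the embedding $\iota'$ on top. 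Combining, $D_i\circ\iota'_* = \pi_i^*\pi_{i*}\iota'_* = \pi_i^*\iota_*\pi_{i*} = \iota'_*\pi_i^*\pi_{i*} = \iota'_*\circ D_i$, so the operators $D_i$ form a compatible system and pass to the colimit $K'_{\hT}(\cF)=\varinjlim_n K'_{\hT}(\cF^{[n]})$. Finally, since the $\cF^{[n]}$ are cofinal in the system of all $\cF^m$, this colimit is canonically the same as $\varinjlim_m K'_{\hT}(\cF^m)=K'_{\hT}(\cF)$, which yields the desired operator $D_i$ on $K'_{\hT}(\cF)$.

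The main obstacle is the first paragraph: correctly identifying the image and preimage of Schubert strata under $\pi_i$, in particular checking $\pi_i(S_w)=S_{w'}$ with $\ell(w')\le\ell(w)$ and that $\pi_i^{-1}(\cF^n_i)$ is contained in $\cF^{n+1}$ so that the two exhaustions are mutually cofinal. This rests on the standard combinatorics of minimal coset representatives for $W/W_{P_i}$ together with the fact that $\pi_i\colon\cF\to\cF_i$ is a $\bP^1$-bundle restricting to the projection $S_w\to S_{w'}$; once this is in place, the base-change and functoriality arguments are routine. One should also record that each $K'_{\hT}(\cF^{[n]})$ makes sense because $\pi_i^{-1}\cF^n_i$ is a genuine finite-dimensional $\hT$-variety, which is immediate from $\cF^n_i\subseteq\cF^{[n]}\subseteq\cF^{n+1}$ and the fact that $\cF^{n+1}$ is such a variety.
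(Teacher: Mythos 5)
Your proof is correct and takes essentially the same route as the paper: the key step in both is the cartesian square over the closed embedding $\cF^{n}_i\hookrightarrow\cF^{n+1}_i$, where flat equivariant base change ($\pi_i^*\iota_*=\iota'_*\pi_i^*$) and functoriality of proper push-forward ($\pi_{i*}\iota'_*=\iota_*\pi_{i*}$) combine to give $D_i\circ\iota'_*=\iota'_*\circ D_i$, so the operators pass to the colimit. The only difference is that you verify explicitly the cofinality inclusions $\cF^n\subseteq\pi_i^{-1}\cF^n_i\subseteq\cF^{n+1}$, which the paper simply asserts by saying that the $\pi_i^{-1}\cF^n_i$ define the ind-structure on $\cF$.
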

\Pf Since $\pi_i$ is a principal $\bP^1$-bundle, each $\pi_i^{-1}\cF_i^{n}$ is a closed algebraic subvariety in $\cF$ and these varieties define, for varying $n$, the ind-structure on $\cF$. It suffices therefore to show that $D_i$ is compatible with the transition map $K'_{\hT}(\pi_i^{-1}\cF_i^{n-1})\rightarrow K'_{\hT}(\pi_i^{-1}\cF_i^{n})$.
We consider the cartesian diagram

$$\xymatrix{
\pi_i^{-1}\cF_i^{n-1}\ar[r]^{\pi_i} \ar[d]^{\tilde{\iota}} & \cF_i^{n-1}\ar[d]^{\iota}\\
  \pi_i^{-1}\cF_i^n \ar[r]^{\pi_i}& \cF_i^n
}$$
where the map $\iota$ is the natural inclusion and the map $\tilde{\iota}$ is induced by it.
Flat equivariant base change for equivariant $K'$-theory
yields $\pi_{i}^*\iota_*=\tilde{\iota}_*\pi_i^*$
and functoriality for proper maps yields $\iota_*\pi_{i_*}=\pi_{i*}\tilde{\iota}_*$. Hence
$$ \tilde{\iota}_*\pi^*_i\pi_{i*}=\pi_{i}^*\iota_*\tilde{\pi}_{i*}=\pi^*_i\pi_{i*}\tilde{\iota}_*.$$ \qed

\vskip8pt

We denote the resulting operator on $K'_{\hT}(\cF)$ by the same symbol $D_i$.
As in \cite[Prop. 2.6]{DemazureSchubert}, the projective bundle theorem in $K$-theory identifies the operator $D_i$ as an idempotent projector as follows. We note that under the isomorphism $\cF\simeq \bP(LG\times^{L^+P_i} V_i)$ of \ref{lem-ident} the morphism $\pi_i:\cF\rightarrow\cF_i$ becomes the projective space bundle associated to the rank $2$ vector bundle $LG\times^{L^+P_i} V_i\rightarrow \cF_i, [g,v]\mapsto gL^+P_i$.
\begin{lemma}
Let $0\leq i\leq l$. The correspondence
$$ (a_0,a_1)\mapsto 1\otimes \pi_i^*a_0 + \cL_{\rho_i}\otimes \pi_i^*a_1$$
induces a $\bbZ[\hT]$-linear isomorphism
$$K'_{\hT}(\cF^n_i)^{\oplus 2}\car K'_{\hT}(\pi_i^{-1}\cF^n_i)$$
for any $n$.
\end{lemma}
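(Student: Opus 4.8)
The plan is to apply the projective bundle theorem in equivariant $K$-theory to the rank $2$ bundle $\cE_i := LG\times^{L^+P_i} V_i$ on $\cF_i$ (restricted to $\cF_i^n$), whose projectivization is $\pi_i:\pi_i^{-1}\cF_i^n\to\cF_i^n$ by Lemma \ref{lem-ident} and the remark preceding the statement. The projective bundle theorem for equivariant $K'$-theory (see Thomason, or the version in \cite{VV10}) says that for a rank $2$ equivariant vector bundle $\cE_i$ with associated $\bP^1$-bundle $p:\bP(\cE_i)\to Y$ (here $Y=\cF_i^n$, which is a finite-dimensional variety so the theorem applies), the group $K'_{\hT}(\bP(\cE_i))$ is free as a module over $K'_{\hT}(Y)$ on the basis $\{1,\,[\cO_{\bP(\cE_i)}(-1)]\}$; equivalently $(a_0,a_1)\mapsto p^*a_0 + [\cO(-1)]\cdot p^*a_1$ is an isomorphism $K'_{\hT}(Y)^{\oplus 2}\car K'_{\hT}(\bP(\cE_i))$. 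So the core of the argument is just to invoke this theorem and then translate its statement into the notation of the lemma.

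**First** I would record the identification: by Lemma \ref{lem-ident} there is a $\LG$-equivariant (hence $\hT$-equivariant) isomorphism $\cF\simeq LG\times^{L^+P_i}\bP(V_i) = \bP(\cE_i)$ under which $\pi_i$ corresponds to the bundle projection $p$. Restricting over the $\hT$-stable finite-dimensional subvariety $\cF_i^n\subseteq\cF_i$, and using that $\pi_i^{-1}\cF_i^n$ is exactly $\bP(\cE_i|_{\cF_i^n})$, I would apply the equivariant projective bundle theorem to conclude that $K'_{\hT}(\pi_i^{-1}\cF_i^n)$ is $K'_{\hT}(\cF_i^n)$-free on $1$ and the class of the relative tautological bundle $\cO_{\pi_i}(-1)$. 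Then the only remaining point is to identify $[\cO_{\pi_i}(-1)]$ with $[\cL_{\rho_i}]$; but this is precisely the content of Lemma \ref{assertionII}, which says the pull-back of $\cO(-1)$ to $\cF$ equals $\cL_{\rho_i}$. Substituting this into the basis $\{1,[\cO_{\pi_i}(-1)]\}$ gives the stated correspondence $(a_0,a_1)\mapsto 1\otimes\pi_i^*a_0 + \cL_{\rho_i}\otimes\pi_i^*a_1$, and $\bbZ[\hT]$-linearity is automatic since all maps involved ($\pi_i^*$, multiplication by a fixed $K$-class) are $\bbZ[\hT]=K'_{\hT}(\Spec k)$-linear.

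**The main subtlety** — rather than an obstacle — is making sure the projective bundle theorem is available in the equivariant $K'$-theory ($=G$-theory of coherent sheaves) setting over the possibly singular projective $k$-varieties $\cF_i^n$ for the torus $\hT$. This is standard: the theorem holds for $G$-equivariant $K'$-theory of a projective bundle $\bP(\cE)\to Y$ over any Noetherian base $Y$ with a $G$-action, since the filtration argument (devissage along the relative $\cO(j)$'s) only uses properness and flatness of $p$ together with $Rp_*\cO_{\bP(\cE)}(j)$ being locally free for the relevant $j$, none of which needs $Y$ smooth; one reference is Thomason's equivariant work, and the affine-flag-manifold analogue over $\bbC$ is used in \cite[2.2.1]{VV10} and \cite{KashiwaraFlag}. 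Once this is granted, the proof is a two-line citation: apply the theorem, then rewrite $[\cO(-1)]$ as $[\cL_{\rho_i}]$ via Lemma \ref{assertionII}.

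\begin{proof}
By Lemma \ref{lem-ident} the $\LG$-equivariant isomorphism $\cF\car LG\times^{L^+P_i}\bP(V_i)$ identifies $\pi_i:\cF\to\cF_i$ with the projective bundle $\bP(\cE_i)\to\cF_i$ associated to the rank $2$ $\hT$-equivariant vector bundle $\cE_i:=LG\times^{L^+P_i}V_i$. Restricting over the $\hT$-stable finite-dimensional projective variety $\cF_i^n$ gives $\pi_i^{-1}\cF_i^n\simeq\bP(\cE_i|_{\cF_i^n})$. By the projective bundle theorem in equivariant $K'$-theory \cite[1.10/11]{Thomason_LefI} (compare \cite[2.2.1]{VV10}), the map
$$K'_{\hT}(\cF_i^n)^{\oplus 2}\lra K'_{\hT}(\pi_i^{-1}\cF_i^n),\qquad (a_0,a_1)\mapsto \pi_i^*a_0 + [\cO_{\pi_i}(-1)]\cdot\pi_i^*a_1,$$
is a $\bbZ[\hT]$-linear isomorphism, where $\cO_{\pi_i}(-1)$ denotes the relative tautological line bundle. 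By Lemma \ref{assertionII} one has $[\cO_{\pi_i}(-1)]=[\cL_{\rho_i}]$ in $K'_{\hT}(\cF)$, and hence in $K'_{\hT}(\pi_i^{-1}\cF_i^n)$. Substituting this gives the asserted correspondence $(a_0,a_1)\mapsto 1\otimes\pi_i^*a_0 + \cL_{\rho_i}\otimes\pi_i^*a_1$, which is therefore a $\bbZ[\hT]$-linear isomorphism.
\end{proof}
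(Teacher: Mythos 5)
Your proof is correct and follows essentially the same route as the paper: identify $\pi_i$ over $\cF_i^n$ with the projectivization of the rank $2$ bundle $LG\times^{L^+P_i}V_i$ via Lemma \ref{lem-ident}, invoke the equivariant projective bundle theorem, and rewrite $\cO(-1)$ as $\cL_{\rho_i}$ using Lemma \ref{assertionII}. The only quibble is the reference: the equivariant projective bundle theorem is \cite[Thm. 3.1]{Thomason_gpscheme}, not \cite[1.10/11]{Thomason_LefI}, which the paper uses only for push-forward/pull-back functoriality.
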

\Pf Since $\cL_{\rho_i}=\cO(-1)$ by \ref{assertionII}, we apply the (equivariant) projective bundle theorem \cite[Thm. 3.1]{Thomason_gpscheme} to $\pi_i^{-1}\cF^n_i\rightarrow \cF_i^n$. \qed

\vskip5pt

\begin{prop} In the situation of the lemma, the operator $D_i$ on $K'_{\hT}(\pi_i^{-1}\cF^n_i)$ has the property
$$D_i (1\otimes \pi_i^*a_0 + \cL_{\rho_i}\otimes \pi_i^*a_1) = 1\otimes \pi_i^*a_0.$$
\end{prop}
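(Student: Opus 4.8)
The plan is to exploit $\pi_i^*$-linearity of $D_i$ and reduce the computation to the two generators $1$ and $\cL_{\rho_i}$ furnished by the preceding lemma. First I would record that for $b\in K'_{\hT}(\cF_i^n)$ and $x\in K'_{\hT}(\pi_i^{-1}\cF_i^n)$ the projection formula gives $\pi_{i*}(\pi_i^*b\cdot x)=b\cdot\pi_{i*}x$, whence $D_i(\pi_i^*b\cdot x)=\pi_i^*\pi_{i*}(\pi_i^*b\cdot x)=\pi_i^*b\cdot D_i(x)$. Thus $D_i$ is linear for the $K'_{\hT}(\cF_i^n)$-module structure on $K'_{\hT}(\pi_i^{-1}\cF_i^n)$ coming from $\pi_i^*$, and by the preceding lemma it suffices to evaluate $D_i$ at the classes $1$ and $\cL_{\rho_i}$.

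Next I would compute these two values using that $\pi_i$ is a $\hT$-equivariant, Zariski-locally trivial $\bP^1$-bundle, so the derived pushforwards of line bundles can be read off fibrewise by (equivariant) cohomology and base change \cite[1.11]{Thomason_LefI}. From $H^0(\bP^1,\cO)=k$ and $H^{>0}(\bP^1,\cO)=0$ one obtains $\pi_{i*}\cO_{\pi_i^{-1}\cF_i^n}=\cO_{\cF_i^n}$ with vanishing higher direct images, hence $\pi_{i*}(1)=[\cO_{\cF_i^n}]$ and $D_i(1)=\pi_i^*[\cO_{\cF_i^n}]=1$. By \ref{assertionII} the restriction of $\cL_{\rho_i}$ to $\pi_i^{-1}\cF_i^n$ is $\cO(-1)$ for the bundle $\pi_i^{-1}\cF_i^n=\bP(LG\times^{L^+P_i}V_i)|_{\cF_i^n}\to\cF_i^n$; since $H^*(\bP^1,\cO(-1))=0$, all derived pushforwards of $\cL_{\rho_i}$ vanish, so $\pi_{i*}(\cL_{\rho_i})=0$ and $D_i(\cL_{\rho_i})=0$.

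Combining these, $D_i(1\otimes\pi_i^*a_0+\cL_{\rho_i}\otimes\pi_i^*a_1)=\pi_i^*a_0\cdot D_i(1)+\pi_i^*a_1\cdot D_i(\cL_{\rho_i})=1\otimes\pi_i^*a_0$, which is the claim; this is exactly the $K$-theoretic incarnation of the argument in \cite[Prop. 2.6]{DemazureSchubert}. The step requiring the most care — though it is more bookkeeping than genuine obstacle — is checking that the two pushforward identities hold on the nose $\hT$-equivariantly. This is fine because $\pi_i$ is $\hT$-equivariant and the cohomology of $\cO_{\bP^1}$ (resp. $\cO_{\bP^1}(-1)$) is concentrated in degree zero (resp. vanishes identically), so no higher-direct-image correction terms enter and equivariant cohomology and base change applies fibre by fibre.
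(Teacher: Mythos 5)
Your proof is correct, but it runs along a slightly different track than the paper's. The paper gets both pushforward computations at once by citing Quillen's lemmas on Mumford-regular sheaves on a projective bundle (Lemmas 8.1.1(c) and 8.1.3 of \cite{QuillenH}), applied directly to the twisted modules $M_0=\pi_i^*a_0$ and $\cO(-1)\otimes\pi_i^*a_1$: these give $\pi_{i*}M_0=a_0$, $\pi_{i*}(\cO(-1)\otimes\pi_i^*a_1)=0$ and the vanishing of all higher direct images in one stroke. You instead factor the computation through the projection formula, reducing to $R\pi_{i*}\cO_{\pi_i^{-1}\cF_i^n}=\cO_{\cF_i^n}$ and $R\pi_{i*}\cL_{\rho_i}=0$, which you then verify fibrewise using Zariski-local triviality of the $\bP^1$-bundle and the identification $\cL_{\rho_i}=\cO(-1)$ from Lemma \ref{assertionII}; this is more self-contained and makes the geometric input ($H^*(\bP^1,\cO)$ versus $H^*(\bP^1,\cO(-1))$) explicit, at the price of having to check the projection formula and base change in the equivariant setting, which you do address. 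One small imprecision to fix: as stated, ``$\pi_{i*}(\pi_i^*b\cdot x)=b\cdot\pi_{i*}x$ for $b\in K'_{\hT}(\cF_i^n)$ and $x\in K'_{\hT}(\pi_i^{-1}\cF_i^n)$'' is not well-formed in general, since products of two coherent classes on the possibly singular varieties $\cF_i^n$, $\pi_i^{-1}\cF_i^n$ are not defined; the correct statement is the derived projection formula $R\pi_{i*}(L\otimes\pi_i^*\cM)\simeq R\pi_{i*}(L)\otimes^{L}\cM$ with $L$ a line bundle (perfect) on the total space and $\cM$ coherent on the base. In your application $L\in\{\cO,\cL_{\rho_i}\}$ and $R\pi_{i*}L$ is $\cO$ or $0$, so all the products you actually use are defined and the argument goes through; your equivariance remark is also fine, since the adjunction unit $\cO\to\pi_{i*}\cO$ and the vanishing statements are natural and can be checked after forgetting the $\hT$-action.
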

\Pf According to \cite[Lemma 8.1.1(c)]{QuillenH} the modules $M_0:=\pi_i^*a_0$ and $M_1(-1):=\cO(-1)\otimes M_1$ with $M_1:=\pi_i^*a_1$  satisfy $\pi_{i*}M_0=a_0$ and $\pi_{i*}M_1(-1)=0$ and, moreover, they are regular modules in the sense of Mumford, cf. loc.cit. Then \cite[Lemma 8.1.3]{QuillenH} implies $R^j\pi_{i*}M_0=R^j\pi_{i*}M_1(-1)=0$ for $j>0$.\qed

\vskip8pt
\begin{cor}\label{cor-idem} The operator $D_i$ on $K'_{\hT}(\cF)$ satisfies $D_i^2=D_i$.
\end{cor}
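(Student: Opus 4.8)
The plan is to deduce $D_i^2 = D_i$ directly from the preceding proposition together with the two lemmas that precede it. First I would recall that, by the projective bundle lemma, every class $x \in K'_{\hT}(\pi_i^{-1}\cF_i^n)$ can be written uniquely in the form $x = 1\otimes\pi_i^*a_0 + \cL_{\rho_i}\otimes\pi_i^*a_1$ for suitable $a_0,a_1 \in K'_{\hT}(\cF_i^n)$. The proposition then says $D_i x = 1\otimes\pi_i^*a_0$, which is again of the same shape, now with ``coefficients'' $(a_0, 0)$. Applying the proposition a second time to this class gives $D_i(D_i x) = 1\otimes\pi_i^*a_0 = D_i x$. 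Since $x$ was arbitrary, $D_i^2 = D_i$ on each $K'_{\hT}(\pi_i^{-1}\cF_i^n)$.

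Next I would observe that this identity passes to the limit. By Lemma~\ref{lem-comp} the operator $D_i$ on $K'_{\hT}(\cF) = \varinjlim_n K'_{\hT}(\pi_i^{-1}\cF_i^n)$ is, by construction, compatible with the transition maps; hence an identity of operators that holds at each finite stage holds on the colimit. Therefore $D_i^2 = D_i$ on $K'_{\hT}(\cF)$ as well. The same conclusion then holds for $\KTs(\cF)$ after tensoring with $k$.

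There is essentially no obstacle here: the real content has already been extracted in the proposition (namely the vanishing $R^j\pi_{i*}M_1(-1) = 0$ and the computation $\pi_{i*}(\cO(-1)\otimes M_1) = 0$, via Quillen's regularity-of-modules lemmas), and in the projective bundle theorem that furnishes the normal form. The only thing to be careful about is that the normal form output by $D_i$ really lies in the subspace where the proposition applies again — but that is immediate since $1\otimes\pi_i^*a_0$ is visibly of the required type with second coefficient zero. So the proof is a one-line bookkeeping argument: $D_i(1\otimes\pi_i^*a_0) = 1\otimes\pi_i^*a_0$, whence $D_i^2 = D_i$, and the compatibility with the ind-structure from Lemma~\ref{lem-comp} lifts this to all of $K'_{\hT}(\cF)$.
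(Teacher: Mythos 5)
Your argument is correct and is exactly the one the paper intends: the corollary is immediate from the projective bundle normal form together with the proposition, since $D_i x = 1\otimes\pi_i^*a_0$ is again of the normal form with second coefficient zero, and the identity passes to $K'_{\hT}(\cF)$ by the compatibility with transition maps established in Lemma~\ref{lem-comp}. Nothing is missing.
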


\vskip8pt
Remark: According to the preceding proposition, $D_i$ may be also be viewed as an algebraic characteristic $p$ analogue of the operator $D_i$ defined by Kostant-Kumar
on the topological equivariant $K$-theory of the complex analytic Kac-Moody flag manifold \cite[Def. (3.6)]{KoKu2}.
\vskip8pt

We compute the action of $D_i$ on the elements $[\cO_{S_w}]$ of $K'_{\hT}(\cF)$.
\begin{lemma}\label{lem-aux} Let $Z\subset \cF_i^n$ be a closed $\hT$-subvariety and let $[\cO_Z]$ be its class in $K'_{\hT}(\cF_i^n)$. Then
$\pi^*[\cO_Z]=[\cO_{\pi_i^{-1}Z}]$ in $K'_{\hT}(\pi_i^{-1}\cF_i^n)$.
\end{lemma}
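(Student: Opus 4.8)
The plan is to reduce the statement to the fact that $\pi_i$ is a flat morphism and that flat pull-back commutes with forming the class of a structure sheaf. First I would note that $Z\hookrightarrow\cF_i^n$ is a closed embedding and consider the cartesian square
$$\xymatrix{
\pi_i^{-1}Z \ar[r] \ar[d]_{\pi_i} & \pi_i^{-1}\cF_i^n \ar[d]^{\pi_i}\\
Z \ar[r] & \cF_i^n,
}$$
in which the horizontal arrows are closed embeddings and the vertical arrows are the (flat, proper) $\bP^1$-bundle projections. Since $\pi_i$ is flat, the scheme-theoretic preimage $\pi_i^{-1}Z$ is defined by the pull-back of the ideal sheaf of $Z$, and flatness gives the short exact sequence $0\to\cI_Z\cdot\cO_{\pi_i^{-1}\cF_i^n}\to\cO_{\pi_i^{-1}\cF_i^n}\to\pi_i^*\cO_Z\to 0$, whence $\pi_i^*\cO_Z\simeq\cO_{\pi_i^{-1}Z}$ as $\hT$-equivariant sheaves. (All of this is $\hT$-equivariant because $Z$ is a $\hT$-subvariety and $\pi_i$ is $\hT$-equivariant, the $\hT$-action on $\cF$ and $\cF_i$ being compatible via $\pi_i$.)

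The only genuine point to check is that the flat pull-back $\pi_i^*$ on equivariant $K'$-theory, which a priori is defined on the level of Grothendieck groups by $[\cF]\mapsto\sum_j(-1)^j[\mathcal{T}or_j(\pi_i^*\cF)]$ or simply by the exact functor $\pi_i^*$ since $\pi_i$ is flat, really does send the class $[\cO_Z]$ to $[\pi_i^*\cO_Z]$; but this is immediate from flatness, as $\pi_i^*$ is then an exact functor on coherent $\hT$-modules and hence induces the obvious map on Grothendieck groups sending $[\cO_Z]$ to $[\pi_i^*\cO_Z]=[\cO_{\pi_i^{-1}Z}]$, e.g.\ by \cite[1.10]{Thomason_LefI}. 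I would cite the same reference for the compatibility of $\pi_i^*$ with the equivariant structure.

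There is essentially no obstacle here: the statement is a formal consequence of flatness together with the definition of the pull-back map on $K'$-theory, and the mild care required is only to keep track of the equivariance, which follows because everything in sight is $\hT$-equivariant by construction. I would keep the proof to two or three lines, referring to the cartesian diagram above and to \cite[1.10/11]{Thomason_LefI} for the flat pull-back and its compatibility with closed immersions.
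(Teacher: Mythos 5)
Your proposal is correct and follows the same route as the paper, whose entire proof is the one-line observation that the claim follows directly from the flatness of $\pi_i$; you have merely spelled out why flatness gives $\pi_i^*\cO_Z\simeq\cO_{\pi_i^{-1}Z}$ and why the exact functor $\pi_i^*$ then induces the stated equality of classes in equivariant $K'$-theory. No further comment is needed.
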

\Pf This follows directly from the flatness of $\pi_i$. \qed

\vskip8pt
For $w\in W^{P_i}$ let $S_w^{i}$ be a Schubert variety in $\cF_i$. We have $\pi_i^{-1}S_w^{i}=S_{w'}$
where $w'\in W$ has the property: $w'=w$ if $ws_i<w$ or else $w'=ws_i$. This implies that for any $w\in W$ one has
\begin{numequation}\label{inverse_schubert} \pi_i^{-1}\pi_i(S_{w})=S_{w} \hskip10pt{\rm if}\hskip10pt ws_i<w \hskip10pt {\rm or~else}\hskip10pt \pi_i^{-1}\pi_i(S_{w})=S_{ws_i}.\end{numequation}

Remark: Similarly, the inverse image of $S_w\subset\cG$ under $\pi$ equals $S_{ww_0}\subset \cF$ where $w_0$ denotes the longest element in $W_0$.
Indeed, the inverse image $\pi^{-1}S_w$ is a Schubert variety in $\cF$ and $ww_0$ is the representative of maximal length in its coset modulo $W_0$.

\begin{lemma} Let $w\in W$ and suppose $S_w\subset \pi_i^{-1}\cF_i^n$ for some $n$. Then $$\pi_{i*}[\cO_{S_w}]=[\cO_{\pi_i(S_w)}]$$
in $K'_{\hT}(\cF^n_i)$.
\end{lemma}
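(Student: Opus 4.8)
The statement asserts that $\pi_{i*}[\cO_{S_w}] = [\cO_{\pi_i(S_w)}]$ when $S_w \subset \pi_i^{-1}\cF_i^n$. By definition $\pi_{i*}[\cO_{S_w}] = \sum_j (-1)^j [R^j\pi_{i*}\cO_{S_w}]$, so the task is to compute the higher direct images of the structure sheaf of $S_w$ along the $\bP^1$-bundle $\pi_i$. I would split into the two cases distinguished in (\ref{inverse_schubert}). In the case $ws_i < w$, equation (\ref{inverse_schubert}) gives $\pi_i^{-1}\pi_i(S_w) = S_w$, so $S_w$ is itself a $\bP^1$-bundle over $Z := \pi_i(S_w)$ via the restriction of $\pi_i$; in the case $ws_i > w$, the restriction $\pi_i|_{S_w} : S_w \to Z$ is instead a birational (in fact bijective, since $C_w$ maps isomorphically onto the corresponding cell in $\cF_i$) projective morphism onto the Schubert variety $Z$. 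In both cases I want $\pi_{i*}\cO_{S_w} = \cO_Z$ and $R^j\pi_{i*}\cO_{S_w} = 0$ for $j>0$.

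**Key steps.** First, since $\pi_i$ is proper, each $R^j\pi_{i*}\cO_{S_w}$ is a coherent $\hT$-equivariant sheaf on $\cF_i^n$ supported on $Z = \pi_i(S_w)$. Second, I would compute these sheaves fibrewise: the theorem on cohomology and base change (or, since everything here is a $\bP^1$-bundle after restriction, simply direct inspection) reduces the statement to a computation of $H^j(\pi_i^{-1}(z) \cap S_w, \cO)$ over each point $z \in Z$. In the $ws_i < w$ case the fibre is all of $\pi_i^{-1}(z) \cong \bP^1$, and $H^0(\bP^1,\cO) = k$, $H^1(\bP^1,\cO) = 0$; in the $ws_i > w$ case the fibre $\pi_i^{-1}(z) \cap S_w$ is a single reduced point, with the same cohomology. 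Hence $R^j\pi_{i*}\cO_{S_w} = 0$ for $j>0$, and $\pi_{i*}\cO_{S_w}$ is a line bundle on $Z$ — indeed the natural map $\cO_Z \to \pi_{i*}\cO_{S_w}$ is an isomorphism because it is so on fibres (rank one, reduced fibres, and $Z$ reduced, so one invokes e.g.\ that a surjection of line bundles or a fibrewise-iso of coherent sheaves on a reduced base is an iso). Third, conclude $\pi_{i*}[\cO_{S_w}] = [\pi_{i*}\cO_{S_w}] = [\cO_Z] = [\cO_{\pi_i(S_w)}]$.

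**Alternative route and the main obstacle.** A cleaner packaging, avoiding delicate base-change bookkeeping, uses the projective bundle description directly. By Lemma \ref{lem-ident}, $\pi_i$ is the $\bP^1$-bundle $\bP(E) \to \cF_i$ for $E = LG\times^{L^+P_i}V_i$ of rank $2$. In the case $ws_i < w$, $S_w = \pi_i^{-1}(Z)$ by (\ref{inverse_schubert}), so $\cO_{S_w} = \pi_i^*\cO_Z$ and the projection formula together with $\pi_{i*}\pi_i^*\cO_Z = \cO_Z \otimes \pi_{i*}\cO_{\bP(E)} = \cO_Z$ (and vanishing higher direct images) gives the result immediately; I would prefer to state this half via Lemma \ref{lem-aux} applied "in reverse," noting $[\cO_{S_w}] = \pi_i^*[\cO_Z]$ and using that $\pi_{i*}\pi_i^* = \id$ on $K'_{\hT}(\cF_i^n)$ (the $D_i$-computation, or simply the projective bundle theorem). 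The genuinely separate work is the case $ws_i > w$: here $\pi_i|_{S_w}$ is a proper birational morphism onto a (normal, since Schubert varieties in this setting are normal — cf.\ \cite{Faltings}, \cite{PR_twisted}) variety $Z$, so one wants $\pi_{i*}\cO_{S_w} = \cO_Z$ and $R^{>0}\pi_{i*}\cO_{S_w} = 0$. The main obstacle is precisely this: in characteristic $p$ one cannot cite Kempf-style rational-singularities vanishing for free, so I would argue directly from the fact that $\pi_i|_{S_w} : S_w \to Z$ is a projective morphism whose fibres are single reduced points (the cell $C_w \subset S_w$ maps isomorphically onto the open cell of $Z \subset \cF_i$, and every fibre over a closed point is a point because $\ell(w) = \ell(\pi_i(w))$ when $ws_i > w$), whence $\pi_i|_{S_w}$ is finite; a finite birational morphism onto a normal variety is an isomorphism, so $\pi_{i*}\cO_{S_w} = \cO_Z$ and all higher direct images vanish trivially. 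I would verify the finiteness/fibre claim carefully on cells, as this is the one place where a genuine geometric input (rather than formal $K$-theory) is needed.
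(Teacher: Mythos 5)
Your first case ($ws_i<w$) is fine and essentially the paper's argument: there $S_w=\pi_i^{-1}(Z)$, so the claim reduces to the projective bundle computation ($\pi_{i*}\pi_i^*=\mathrm{id}$, vanishing higher direct images as in Quillen's Lemmas 8.1.1/8.1.3). The gap is in the case $ws_i>w$, which you correctly identify as the real content, but your key geometric claim there is false: the restriction $\pi_i|_{S_w}\colon S_w\to Z=\pi_i(S_w)$ is birational but in general \emph{not} finite, and its fibres over closed points are \emph{not} all single points. The fibre of $\pi_i$ over a $T$-fixed point $u$ of $Z$ (with $u$ minimal in $uW_{P_i}$, i.e.\ $us_i>u$) meets $S_w$ in the full $\bP^1$-fibre whenever $us_i\leq w$, since $S_w$ is $L^+B$-stable and closed and hence contains $S_{us_i}$. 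Already for the finite flag variety of $SL_3$ with $w=s_1s_2$ and $i=1$ one has $ws_1>w$, yet the fibre over the base point of $\pi_1(S_w)$ is the whole $\bP^1=S_{s_1}\subset S_{s_1s_2}$; the map contracts this curve, so it cannot be an isomorphism, and the shortcut ``finite $+$ birational onto a normal variety $\Rightarrow$ isomorphism $\Rightarrow$ higher direct images vanish trivially'' breaks down. For the same reason the fibrewise computation in your first route is not enough either: with positive-dimensional special fibres and no flatness, fibrewise $H^1=0$ of the \emph{reduced} fibre does not by itself give $R^1\pi_{i*}\cO_{S_w}=0$ (one would need cohomology of the formal/infinitesimal fibres, i.e.\ the theorem on formal functions, not just the set-theoretic fibre).

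What is genuinely needed, and what the paper uses, is exactly the vanishing you were hoping to avoid: $\pi_i|_{S_w}$ is a proper birational morphism onto the Schubert variety $Z$ (\cite[Remark 3.2.3]{BrionKumar}), and both source and target are normal with rational singularities — in characteristic $p$ this is available via Frobenius-splitting methods, and for these affine Schubert varieties it is \cite[Prop.\ 9.6]{PR_twisted}. From this one gets $\pi_{i*}\cO_{S_w}=\cO_Z$ (normality of $Z$ plus Zariski's main theorem, as you say) \emph{and} $R^{>0}\pi_{i*}\cO_{S_w}=0$ (compatible rational resolutions, e.g.\ BSDH/Demazure resolutions of $S_w$ and $Z$), which is what the $K'$-theoretic identity requires. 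So your worry that ``Kempf-style vanishing is not free in char $p$'' is legitimate, but the correct fix is to invoke the char-$p$ results of Pappas--Rapoport/Brion--Kumar rather than the finiteness claim, which fails.
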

\Pf Consider the commutative diagram
$$\xymatrix{
S_w\ar[r]^{\tilde{\iota}} \ar[d]^{\pi_i} & \pi_i^{-1}\cF_i^{n}\ar[d]^{\pi_i}\\
  \pi_i(S_w) \ar[r]^{\iota}& \cF_i^n.
}$$
where $\iota$ and $\tilde{\iota}$ are the corresponding closed embeddings. Functoriality for proper maps implies $\pi_{i*}\tilde{\iota}_*=\iota_*\pi_{i*}$ on $K'_{\hT}(S_w)$. However, $\iota_*$ and $\tilde{\iota}_{*}$ are exact functors on coherent modules, so that we are reduced to show: $$R^j\pi_{i*}(\cO_{S_w})=0 \hskip10pt{\rm for}\hskip10pt j>0 \hskip10pt{\rm and}\hskip10pt \pi_{i*}(\cO_{S_w})=\cO_{\pi_i(S_w)}.$$
We distinguish two cases. Assume first that $ws_i<w$. Then $\pi_i^{-1}\pi_i(S_{w})=S_{w}$ by \ref{inverse_schubert} so that our commutative diagramm is even cartesian. Since $\pi_i$ is a projective space bundle, our claims follow from \cite[8.1.1/8.1.3]{QuillenH}. Assume now $ws_i>w$. Then
$\pi_i^{-1}\pi_i(S_{w})=S_{ws_i}$ by \ref{inverse_schubert} and the map $\pi_i: S_{ws_i}\rightarrow \pi_i(S_{w})$ is a proper birational morphism \cite[Remark 3.2.3]{BrionKumar}. Our claims follow since $S_{ws_i}$ and $\pi_i(S_w)$ are normal with at most rational singularities \cite[Prop. 9.6]{PR_twisted}.
\qed

\vskip8pt

\begin{prop}\label{prop-explicit} Let $w\in W$. One has
$$D_i[\cO_{S_w}] =\begin{cases} [\cO_{S_w}]& {\rm if~} ws_i<w \\ [\cO_{S_{ws_i}}] & {\rm if~}ws_i>w. \end{cases}$$
\end{prop}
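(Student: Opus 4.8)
The plan is to combine the definition $D_i=\pi_i^*\circ\pi_{i*}$ with the two lemmas just proved that compute $\pi_{i*}$ and $\pi_i^*$ on structure sheaves of Schubert varieties, together with the description \eqref{inverse_schubert} of $\pi_i^{-1}\pi_i(S_w)$. Concretely, fix $w\in W$ and choose $n$ large enough so that $S_w\subset\pi_i^{-1}\cF_i^n$; this is possible since the $\pi_i^{-1}\cF_i^n$ exhaust $\cF$.

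First I would apply the lemma on push-forward to get $\pi_{i*}[\cO_{S_w}]=[\cO_{\pi_i(S_w)}]$ in $K'_{\hT}(\cF_i^n)$, noting that $\pi_i(S_w)$ is a Schubert variety $S^i_{\bar w}$ in $\cF_i$. Then I would apply Lemma \ref{lem-aux} to pull this class back: $\pi_i^*[\cO_{\pi_i(S_w)}]=[\cO_{\pi_i^{-1}\pi_i(S_w)}]$. Composing, $D_i[\cO_{S_w}]=[\cO_{\pi_i^{-1}\pi_i(S_w)}]$. Finally I would invoke \eqref{inverse_schubert}: if $ws_i<w$ then $\pi_i^{-1}\pi_i(S_w)=S_w$, giving $D_i[\cO_{S_w}]=[\cO_{S_w}]$, while if $ws_i>w$ then $\pi_i^{-1}\pi_i(S_w)=S_{ws_i}$, giving $D_i[\cO_{S_w}]=[\cO_{S_{ws_i}}]$. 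This is exactly the claimed case distinction.

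One small point to address carefully is the compatibility of these identities with the transition maps in the inductive limit defining $K'_{\hT}(\cF)$ — but Lemma \ref{lem-comp} already established that $D_i$ descends to $K'_{\hT}(\cF)$ compatibly with the ind-structure, and the push-forward/pull-back formulas used are stable under the closed embeddings $\cF_i^{n-1}\hookrightarrow\cF_i^n$ by the same base-change arguments, so no difficulty arises there. I would simply remark that the computation may be performed at a fixed finite level $n$ and is independent of the choice of $n$.

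Honestly, I do not expect any serious obstacle: the proposition is a formal consequence of the two preceding lemmas and \eqref{inverse_schubert}. The only thing requiring a moment's thought is making sure the normality/rational-singularities input (used in the push-forward lemma to handle the birational case $ws_i>w$) has already been supplied — it has, via \cite[Prop. 9.6]{PR_twisted} — so the proof here is essentially a one-line chain of equalities $D_i[\cO_{S_w}]=\pi_i^*\pi_{i*}[\cO_{S_w}]=\pi_i^*[\cO_{\pi_i(S_w)}]=[\cO_{\pi_i^{-1}\pi_i(S_w)}]$ followed by reading off \eqref{inverse_schubert}.
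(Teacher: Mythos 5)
Your argument is exactly the paper's proof: compose the push-forward lemma ($\pi_{i*}[\cO_{S_w}]=[\cO_{\pi_i(S_w)}]$) with Lemma \ref{lem-aux} to get $D_i[\cO_{S_w}]=[\cO_{\pi_i^{-1}\pi_i(S_w)}]$, then read off the two cases from \eqref{inverse_schubert}. Correct, and essentially identical to the paper's one-line proof, with the remarks on the ind-structure and the normality input being harmless elaboration.
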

\Pf According to the two preceding lemmas we have $\pi_i^*\pi_{i*}[\cO_{S_w}]=[\cO_{\pi_i^{-1}\pi_i(S_w)}]$ so that \ref{inverse_schubert} implies the assertion.
\qed

\vskip8pt
From now on we assume that $k\subseteq \overline{\bbF}_p$. We consider the endomorphism ring $$\End_{k[\hT]}(\KTs(\cF))^{\rm op}$$
of $k[\hT]$-linear operators on $\KTs(\cF)$ with multiplication $(\phi\varphi)(c)=\varphi(\phi(c))$ for $c\in\KTs(\cF)$. In particular, $\KTs(\cF)$
is naturally a {\it right} module over this ring. For an element $w\in W$ with reduced decomposition $w=s_{i_1}\cdot\cdot\cdot s_{i_n}$ we define a linear operator $D_w$ on $\KTs(\cF)$ by
$$D_w:=D_{i_1}\cdot\cdot\cdot D_{i_n}$$
and $D_1:={\rm id}$.
\begin{lemma}\label{lem-welldefined}
The operator $D_w$ does not depend on the choice of reduced decomposition for $w$.
\end{lemma}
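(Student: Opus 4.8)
The plan is to reduce the claim to the statement that the operators $D_i$ satisfy the braid relations, and then invoke Matsumoto's theorem (Tits' solution to the word problem for Coxeter groups): any two reduced words for $w\in W$ are connected by a sequence of braid moves $\underbrace{s_is_js_i\cdots}_{m_{ij}}=\underbrace{s_js_is_j\cdots}_{m_{ij}}$, so it suffices to check $\underbrace{D_iD_jD_i\cdots}_{m_{ij}}=\underbrace{D_jD_iD_j\cdots}_{m_{ij}}$ as operators on $\KTs(\cF)$ for each pair $i\neq j$, where $m_{ij}$ is the order of $s_is_j$ in $W$.

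**To verify the braid relation**, I would use Proposition \ref{prop-explicit}, which gives the action of $D_i$ on the $k[\hT]$-basis $\{[\cO_{S_w}]\}_{w\in W}$ of $\KTs(\cF)$ explicitly: $D_i[\cO_{S_w}]=[\cO_{S_w}]$ if $ws_i<w$ and $[\cO_{S_{ws_i}}]$ if $ws_i>w$. By Corollary \ref{cor-basis} it is enough to check the braid identity on each basis vector $[\cO_{S_w}]$. The key combinatorial fact is that for a fixed $w$, the parabolic subgroup $W_{\{i,j\}}$ generated by $s_i,s_j$ is finite (of order $2m_{ij}$), and the coset $wW_{\{i,j\}}$ has a unique element $u$ of maximal length; one checks, using the exchange condition, that applying the word $D_iD_j D_i\cdots$ (length $m_{ij}$) to $[\cO_{S_w}]$ produces $[\cO_{S_u}]$ regardless of whether one starts with $D_i$ or $D_j$. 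Indeed each $D_{i}$ either fixes the current class or moves $w$ up to $ws_i$ within the coset, and after $m_{ij}$ steps one has necessarily reached the (unique) longest coset representative $u$ from either side, since in the dihedral group $W_{\{i,j\}}$ acting on the right one needs exactly $\ell(u)-\ell(wu^{-1}w\cdots)$—more precisely, the combinatorics of right multiplication by the two generators of a dihedral group, starting at the minimal representative $w_{\min}$ of the coset, reaches $u$ after $m_{ij}$ alternating up-steps from either starting letter. Hence both sides equal $[\cO_{S_u}]$.

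Alternatively, and perhaps more cleanly, **I would argue geometrically**: each $D_i=\pi_i^*\pi_{i*}$ is, up to the projective-bundle identification, an idempotent (Corollary \ref{cor-idem}) coming from the $\bP^1$-fibration $\pi_i:\cF\to\cF_i$, and the composite $D_iD_jD_i\cdots$ ($m_{ij}$ factors) should be identified with $\pi_{ij}^*\pi_{ij*}$ for the projection $\pi_{ij}:\cF\to\cF_{ij}$ onto the partial affine flag variety attached to the rank-$2$ parahoric $P_{ij}$ (whose fibre $L^+P_{ij}/L^+B$ is the finite flag variety of the corresponding rank-$2$ group). This requires a $\hT$-equivariant base-change / composition-of-pushforwards argument: $\pi_{ij}=\pi_j^{(ij)}\circ\pi_i$ where $\pi_j^{(ij)}:\cF_i\to\cF_{ij}$, so $\pi_{ij*}=\pi_j^{(ij)}{}_*\pi_{i*}$ and likewise for pull-backs, and one pieces together $D_iD_j\cdots$ by repeatedly inserting $\pi_k^*\pi_{k*}$ and using that successive $\bP^1$-bundles compose to the flag bundle; the symmetry in $i$ and $j$ is then manifest because $\pi_{ij}$ does not depend on the order. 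The same cofinality argument as in Lemma \ref{lem-comp} ensures everything is compatible with the ind-structure.

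**The main obstacle** I anticipate is making the second (geometric) argument rigorous at the level of the ind-scheme, specifically checking that $R^j\pi_{ij*}\cO_{S_w}=0$ for $j>0$ and that the composite of the $\pi_k^*\pi_{k*}$ really collapses to $\pi_{ij}^*\pi_{ij*}$ — this needs normality/rational-singularities input for the rank-$2$ Schubert varieties (available from \cite[Prop. 9.6]{PR_twisted}) plus a Leray-type spectral sequence argument for composed proper morphisms. For this reason I expect the cleanest write-up to actually take the first route: reduce to braid moves by Tits' theorem, and then settle the dihedral-group bookkeeping purely combinatorially using Proposition \ref{prop-explicit}, since there the only thing to verify is the elementary claim that alternating right multiplication by $s_i,s_j$ inside the finite dihedral coset reaches the longest representative after $m_{ij}$ up-steps from either side.
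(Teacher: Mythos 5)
Your first route is correct and follows the same skeleton as the paper: reduce to the braid relations via Tits/Matsumoto (the paper cites Bourbaki IV.\S1, Prop.~5 for exactly this), then verify them on the Schubert-class basis of Corollary \ref{cor-basis} using the explicit formula of Proposition \ref{prop-explicit}. Where you genuinely differ is in how the braid relation is checked: the paper exploits that $W$ is crystallographic, so $m_{ij}\in\{2,3,4,6\}$, and argues case-by-case (writing out only $m_{ij}=2$ via Humphreys, Prop.~5.9), whereas you give a uniform argument, namely that both alternating words of length $m_{ij}$ send $[\cO_{S_w}]$ to the class of the unique longest element of the coset $wW_{\{i,j\}}$. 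That claim is true and your route buys a cleaner, case-free proof; to make it rigorous you only need two standard facts, which you should state in place of the garbled sentence in your second paragraph: (i) writing $w=w_{\min}v$ with $w_{\min}$ the minimal representative of $wW_{\{i,j\}}$ and $v\in W_{\{i,j\}}$, one has $\ell(w_{\min}u)=\ell(w_{\min})+\ell(u)$ for all $u\in W_{\{i,j\}}$, so by Proposition \ref{prop-explicit} the whole computation takes place in the dihedral group starting at $v$ (not at $w_{\min}$, as you wrote); (ii) in the dihedral group an application of a generator fails to increase the length only when the current element has a reduced word ending in that generator, which for the alternating word can happen at most at the first step, so after $m_{ij}$ steps one has reached the longest element from either starting letter. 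Your alternative geometric route (identifying $D_iD_jD_i\cdots$ with $\pi_{ij}^*\pi_{ij*}$ for the rank-two parahoric) is plausible and in the spirit of Kostant--Kumar, but as you yourself note it requires extra vanishing and base-change input not established in the paper, so it is right to regard the combinatorial route as the one to write up.
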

\Pf According to \cite[IV.\S1.Prop. 5]{B-L2}, it suffices to check for $i\neq j$ and $m_{ij}=ord(s_is_j)<\infty$ the relations $D_iD_jD_i\cdot\cdot\cdot=D_jD_iD_j\cdot\cdot\cdot$ with $m_{ij}$ factors on both sides. According to \ref{cor-basis} we can check this on Schubert classes. Since $W$ is crystallographic, we have $m_{ij}\in\{2,3,4,6\}$, so that the claim can be checked case-by-case. We treat the case $m_{ij}=2$, the other cases follow along the same lines, but with more notation. 
Since $m_{ij}=2$, we must show that $D_i$ and $D_j$ commute. So let $w\in W$ and let us argue case-by-case. First, if $ws_i, ws_j<w$, then $[\cO_{S_w}]$ is a fixpoint of $D_iD_j$ and $D_jD_i$.
Secondly, if $ws_i, ws_j>w$, then $ws_is_j>ws_j$ and $ws_js_i>ws_i$ according to case (b) in the proof of \cite[Prop. 5.9]{HumphreysCoxeter} which yields
$$ [\cO_{S_w}](D_iD_j)=[\cO_{S_{ws_i}}]D_j=[\cO_{S_{ws_is_j}}]=[\cO_{S_{ws_js_i}}]=[\cO_{S_w}](D_jD_i).$$
By symmetry in $i$ and $j$, it now remains to consider the case $ws_j>w$ and $ws_i<w$. Then $[\cO_{S_w}](D_iD_j)=[\cO_{S_{ws_j}}]$ and $[\cO_{S_w}](D_jD_i)=[\cO_{S_{ws_j}}]D_i$. We have either $ws_is_j<ws_j$ or $ws_is_j\leq w$ according to
\cite[Prop. 5.9]{HumphreysCoxeter}. In the first case, we are done. The second case leads via $ws_is_j\leq w<ws_j$ to the first case. \qed

\vskip8pt
We summarize the properties of the $D_w$. Let $\cS$ be the set of generators $s_i$ for the affine Coxeter group $W$.
\begin{cor}\label{cor-demrel}
Let $w,w'\in W$. The linear operators $D_w$ on $\KTs(\cF)$ satisfy the relations
\vskip2pt
$$\begin{array}{ccll}
D_wD_{w'} & = & D_{ww'} & {\rm if~}\ell(w)+\ell(w')=\ell(ww'),\\
 &  &  &\\
D_{s}^2 & = & D_{s} & {\rm for~ any~}s\in \cS.
\end{array}
$$
\end{cor}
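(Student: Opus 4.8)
The first relation is essentially a definitional matter once we know that $D_w$ is well defined. The plan is to argue as follows. Suppose $\ell(w)+\ell(w')=\ell(ww')$. Choose reduced decompositions $w=s_{i_1}\cdots s_{i_p}$ and $w'=s_{j_1}\cdots s_{j_q}$. Then the concatenation $s_{i_1}\cdots s_{i_p}s_{j_1}\cdots s_{j_q}$ is a word of length $p+q=\ell(ww')$ representing $ww'$, hence is a reduced decomposition of $ww'$. By definition $D_{ww'}=D_{i_1}\cdots D_{i_p}D_{j_1}\cdots D_{j_q}=D_wD_{w'}$, using Lemma \ref{lem-welldefined} to know that this product is independent of the chosen reduced word. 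So nothing is needed here beyond Lemma \ref{lem-welldefined} and the standard fact about Coxeter groups that the concatenation of reduced words for $w$ and $w'$ is reduced precisely when $\ell(ww')=\ell(w)+\ell(w')$.

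The second relation $D_s^2=D_s$ for $s\in\cS$ is exactly Corollary \ref{cor-idem}, which already states that the operator $D_i$ on $K'_{\hT}(\cF)$ satisfies $D_i^2=D_i$; tensoring with $k$ gives the same identity on $\KTs(\cF)$. So I would simply invoke \ref{cor-idem}, possibly together with \ref{prop-explicit} and \ref{cor-basis} to double-check: on a Schubert class $[\cO_{S_w}]$, applying $D_i$ moves $w$ to the unique element of $\{w,ws_i\}$ that is $>$-maximal, and applying $D_i$ again fixes it since that element $v$ satisfies $vs_i<v$; as the $[\cO_{S_w}]$ form a $k[\hT]$-basis of $\KTs(\cF)$ by \ref{cor-basis}, this reproves $D_s^2=D_s$.

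There is really no serious obstacle here: the corollary is a bookkeeping summary of the two facts already proved, namely Lemma \ref{lem-welldefined} (independence of reduced decomposition, hence $D_w$ is well defined, hence the length-additive multiplicativity) and Corollary \ref{cor-idem} (the quadratic relation). If one wanted to give a fully self-contained argument for the first relation without quoting the Coxeter-combinatorics fact, the mildly delicate point would be checking on Schubert classes that $D_wD_{w'}[\cO_{S_v}]=D_{ww'}[\cO_{S_v}]$ for all $v$, using \ref{prop-explicit} repeatedly and the fact that a reduced word for $w'$ followed by a reduced word for $w$ stays reduced — but this is precisely the content of the Coxeter fact and is cleanest to cite. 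I would therefore keep the proof to two sentences: "The first relation follows from \ref{lem-welldefined}, since the concatenation of reduced decompositions of $w$ and $w'$ is a reduced decomposition of $ww'$ when $\ell(w)+\ell(w')=\ell(ww')$. The second relation is \ref{cor-idem}."
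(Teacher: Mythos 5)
Your proposal is correct and matches the paper's (implicit) argument: the corollary is stated without proof precisely because the first relation is immediate from the definition of $D_w$ via a reduced decomposition together with Lemma \ref{lem-welldefined} and the standard Coxeter fact that concatenated reduced words for $w$ and $w'$ give a reduced word for $ww'$ when lengths add, while the second relation is exactly Corollary \ref{cor-idem}. Nothing is missing.
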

\vskip8pt
Remark: Since $D_w$ acts linearly on $\KTs(\cF)$, there is an induced action of $D_w$ on
$$\bbK_H'(\cF)\simeq k[H] \otimes_{k[\hT]} \KTs(\cF)$$
for any closed subgroup $H\subseteq\hT$ according to \ref{prop-verify-merkur}.

\vskip8pt

Remark: The projection $\pi: \cF\rightarrow\cG$ is a locally trivial $G/{\rm pr}(\cB)$-fibration and we have the linear homomorphism
$$\pi^*: K'_{\hT}(S_w)\rightarrow  K'_{\hT}(\pi^{-1}S_w).$$
Flat equivariant base change, as in the proof of \ref{lem-comp}, shows that these homomorphisms commute with transition maps and give in the limit a linear homomorphism $\pi^*: K'_{\hT}(\cG)\rightarrow  K'_{\hT}(\cF).$ It follows from \ref{lem-aux} and the subsequent remark that $\pi^*$ maps the class of $S_w\subset\cG$ to the class of $S_{ww_0}\subset \cF$.
  According to \ref{cor-basis}, the induced morphism 
\begin{numequation}\label{equ-satakeembedding}\pi^*: \KTs(\cG)\hookrightarrow  \KTs(\cF)\end{numequation}
is therefore injective.

\subsection{Iwahori-Hecke algebras}
We keep the assumption $k\subseteq\overline{\bbF}_p$.
The integral {\it Coxeter-Hecke algebra} of $(W,S)$ with weights $q_s=0$ for all $s\in \cS$ is given as $$\sH=\oplus_{w\in W}\;\bbZ\tilde{Y}_w$$ with relations

$$\begin{array}{ccll}
\tilde{Y}_w\tilde{Y}_{w'} & = & \tilde{Y}_{ww'} & {\rm if~}\ell(w)+\ell(w')=\ell(ww'),\\
 &  &  &\\
\tilde{Y}_{s}^2 & = & -\tilde{Y}_{s} & {\rm for~ any~}s\in \cS,
\end{array}
$$

cf. \cite[Ex. IV.\S2.(24)]{B-L2}. We let $Y_w:=(-1)^{\ell(w)}\tilde{Y}_w$ for any $w$ and $\sH_R:=\sH\otimes_{\bbZ} R$ for a commutative ring $R$.
We have the monoid $$\Lambda_+=\{\lambda\in\Lambda: \langle \lambda,\alpha\rangle \geq 0 {\rm ~for~all~}~\alpha\in\Phi^+\}$$
of dominant cocharacters in $\Lambda$ and its corresponding monoid ring $\bbZ[\Lambda_+]$.
One has $\ell(e^\lambda)=\langle \lambda,2\rho\rangle$ for any $\lambda\in\Lambda_+$ where $2\rho$ equals the sum over elements from $\Phi^+$.
Hence
$$ \Theta: \bbZ[\Lambda_+]\longrightarrow\sH, \lambda\mapsto Y_{e^\lambda}$$
is an injective ring homomorphism. In this way, we view $\bbZ[\Lambda_+]$ as a subring of $\sH$.

\vskip8pt

\begin{prop}
Letting $Y_w$ act through $D_w$ makes $\bbK_{\hT}'(\cF)$ a right $\sH_{k[\hT]}$-module.
\end{prop}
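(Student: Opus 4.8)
The plan is to verify that the assignment $Y_w \mapsto D_w$ respects the defining relations of $\sH_{k[\hT]}$ as displayed just above, and that the resulting $\sH$-action commutes with the $k[\hT]$-module structure on $\bbK'_{\hT}(\cF)$ so as to give an action of the tensor product $\sH_{k[\hT]} = \sH \otimes_{\bbZ} k[\hT]$. First I would recall that by Corollary~\ref{cor-demrel} the operators $D_w$ on $\KTs(\cF)$ already satisfy $D_w D_{w'} = D_{ww'}$ whenever $\ell(w)+\ell(w') = \ell(ww')$, and $D_s^2 = D_s$ for every $s\in\cS$. Passing to the renormalization: since $\KTs(\cF)$ is a right module over $\End_{k[\hT]}(\KTs(\cF))^{\rm op}$ with multiplication $(\phi\varphi)(c) = \varphi(\phi(c))$, letting $\tilde Y_w$ act by $(-1)^{\ell(w)} D_w$ converts these into exactly the relations of $\sH$: for the braid-type relation the sign $(-1)^{\ell(w)+\ell(w')} = (-1)^{\ell(ww')}$ matches up, and for the quadratic relation $\tilde Y_s^2$ acts by $(-1)^{2}D_s^2 = D_s = -\bigl((-1)^1 D_s\bigr) = -\tilde Y_s$-action, which is the relation $\tilde Y_s^2 = -\tilde Y_s$. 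Equivalently, in terms of the $Y_w = (-1)^{\ell(w)}\tilde Y_w$ the relations are literally those of Corollary~\ref{cor-demrel}, so $Y_w \mapsto D_w$ is a homomorphism from $\sH$ into $\End_{k[\hT]}(\KTs(\cF))^{\rm op}$ on the basis elements; one checks it extends $\bbZ$-linearly and multiplicatively because the $Y_w$ for $w\in W$ form a $\bbZ$-basis of $\sH$ with structure constants determined by these relations together with $D_w = D_{s_{i_1}}\cdots D_{s_{i_n}}$ for any reduced word (well-defined by Lemma~\ref{lem-welldefined}).

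Next I would address the coefficient ring. The operators $D_i = \pi_i^*\circ\pi_{i*}$ are $\bbZ[\hT]$-linear — indeed $k[\hT]$-linear after tensoring — since $\pi_i^*$ and $\pi_{i*}$ are homomorphisms of $K'_{\hT}(\Spec k)$-modules by the projection formula, so every $D_w$ commutes with the $k[\hT]$-action on $\bbK'_{\hT}(\cF) = \KTs(\cF)$ (note $\bbK'_{\hT} = \KTs$ with our notation). Hence the $\sH$-action and the $k[\hT]$-module structure together make $\bbK'_{\hT}(\cF)$ a module over $\sH \otimes_{\bbZ} k[\hT] = \sH_{k[\hT]}$: the two actions commute and are both $\bbZ$-linear, so they assemble into a single action of the tensor product ring, the module being a right module because of the stated ``op'' convention on composition of operators. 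Finally, one should note that everything is consistent with the earlier structure: by Corollary~\ref{cor-basis} the module $\KTs(\cF)$ is free over $k[\hT]$ on the Schubert classes, so there are no subtleties about the action being well-defined on a presentation.

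The only genuine point requiring care — and it has already been discharged earlier in the paper — is the well-definedness of $D_w$ independent of the reduced word, which is Lemma~\ref{lem-welldefined}, relying in turn on Proposition~\ref{prop-explicit} (the explicit formula for $D_i$ on Schubert classes) and the case-by-case check of the braid relations using Corollary~\ref{cor-basis}. Granting that, the present proposition is essentially a bookkeeping statement: translate Corollary~\ref{cor-demrel} through the sign normalization $Y_w = (-1)^{\ell(w)}\tilde Y_w$ to recover the Iwahori--Matsumoto-type relations with parameters $q_s = 0$, and observe $k[\hT]$-linearity of each $D_i$. I do not anticipate any obstacle beyond making the normalization and the ``op'' composition convention explicit, so the proof is short: cite Corollary~\ref{cor-demrel}, note that $D_i$ is $k[\hT]$-linear, and conclude.

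\begin{proof}
By Corollary~\ref{cor-demrel} the linear operators $D_w$ on $\KTs(\cF)$ satisfy $D_w D_{w'} = D_{ww'}$ when $\ell(w)+\ell(w')=\ell(ww')$ and $D_s^2 = D_s$ for $s\in\cS$. Comparing with the defining relations of $\sH$ after the substitution $\tilde Y_w = (-1)^{\ell(w)} Y_w$, the relation $Y_w Y_{w'} = Y_{ww'}$ (valid when $\ell(w)+\ell(w')=\ell(ww')$) and $Y_s^2 = Y_s$ match $D_w D_{w'} = D_{ww'}$ and $D_s^2 = D_s$ exactly; recalling that the $Y_w$, $w\in W$, form a $\bbZ$-basis of $\sH$ with structure constants determined by these relations together with $Y_w = Y_{s_{i_1}}\cdots Y_{s_{i_n}}$ for a reduced word $w = s_{i_1}\cdots s_{i_n}$ (cf. \ref{lem-welldefined}), the assignment $Y_w \mapsto D_w$ defines a ring homomorphism $\sH \to \End_{k[\hT]}(\KTs(\cF))^{\rm op}$. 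Each $D_i = \pi_i^*\circ\pi_{i*}$ is $k[\hT]$-linear, since $\pi_i^*$ and $\pi_{i*}$ are homomorphisms of $K'_{\hT}(\Spec k)$-modules by the projection formula; hence so is every $D_w$. Therefore the $\sH$-action commutes with the $k[\hT]$-module structure on $\bbK'_{\hT}(\cF) = \KTs(\cF)$, and the two assemble into a right action of $\sH_{k[\hT]} = \sH \otimes_{\bbZ} k[\hT]$.
\end{proof}
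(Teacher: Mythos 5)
Your proof is correct and follows essentially the same route as the paper: translate Corollary~\ref{cor-demrel} through the sign normalization $Y_w=(-1)^{\ell(w)}\tilde Y_w$ so that $Y_w\mapsto D_w$ extends to a ring homomorphism $\sH_{k[\hT]}\to\End_{k[\hT]}(\KTs(\cF))^{\rm op}$, which gives the right module structure. Your extra remarks on $k[\hT]$-linearity of the $D_i$ via the projection formula and on well-definedness via Lemma~\ref{lem-welldefined} are fine elaborations of points the paper leaves implicit.
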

\Pf The elements $Y_w$ are a $\bbZ$-basis of $\sH$ with defining relations $Y_wY_{w'}=Y_{ww'}$ if $\ell(w)+\ell(w')=\ell(ww')$ and $Y_{s}^2=Y_{s}$
for any $s\in \cS$. Acccording to \ref{cor-demrel} the
map $Y_w\mapsto D_w$ extends therefore to a ring homomorphism between $\sH_{k[\hT]}$ and $\End_{k[\hT]}(\KTs(\cF))^{\rm op}$. \qed

\vskip8pt
We view the algebra $\sH_{k[\hT]}$ via its multiplication as a right module over itself. 
\begin{thm}
There is an isomorphism of right $\sH_{k[\hT]}$-modules
$$\Xi: \sH_{k[\hT]}\car \bbK_{\hT}'(\cF),~~Y_w\mapsto [\cO_{S_w}].$$
Specialising at $\hT=1$ yields an isomorphism of right $\sH_k$-modules $\sH_k\simeq\bbK'(\cF)$.

\end{thm}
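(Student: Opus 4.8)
\emph{Proof proposal.} The plan is to construct $\Xi$ directly on the distinguished $k[\hT]$-bases of source and target, then to check separately that it is multiplicative for the $\sH$-action and that it is bijective, and finally to obtain the specialization at $\hT=1$ by base change along the augmentation $k[\hT]\to k$.

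First I would use that $\sH_{k[\hT]}=\sH\otimes_{\bbZ}k[\hT]$ is free over $k[\hT]$ with basis $\{Y_w\}_{w\in W}$, so that there is a unique $k[\hT]$-linear map $\Xi\colon\sH_{k[\hT]}\to\KTs(\cF)$ sending $Y_w$ to $[\cO_{S_w}]$. By Corollary \ref{cor-basis} applied with $P=B$ (so $W^{P}=W$), the $[\cO_{S_w}]$ form a $k[\hT]$-basis of $\KTs(\cF)$; hence $\Xi$ carries a basis bijectively onto a basis and is automatically a $k[\hT]$-linear isomorphism. The content is thus entirely in showing that $\Xi$ intertwines the right $\sH_{k[\hT]}$-module structures.

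For that, since $\sH$ is generated as a ring by $Y_1=1$ and the $Y_s$, $s\in\cS$, and since $\Xi$ is $k[\hT]$-linear, it is enough to verify $\Xi(Y_wY_s)=\Xi(Y_w)\cdot Y_s$ for all $w\in W$ and $s\in\cS$; the relation $\Xi(hh')=\Xi(h)\cdot h'$ for arbitrary $h'$ then follows by linearity and induction on $\ell(h')$. On the algebra side one has $Y_wY_s=Y_w$ when $ws<w$ (write $Y_w=Y_{ws}Y_s$ and use $Y_s^2=Y_s$) and $Y_wY_s=Y_{ws}$ when $ws>w$, which is exactly the presentation of $\sH$ in the $Y$-basis recorded just before the theorem. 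On the geometric side, the proposition preceding the theorem identifies $\Xi(Y_w)\cdot Y_s=[\cO_{S_w}]\cdot Y_s$ with $D_s[\cO_{S_w}]$, and Proposition \ref{prop-explicit} evaluates this as $[\cO_{S_w}]$ if $ws<w$ and $[\cO_{S_{ws}}]$ if $ws>w$. Comparing the two sides case by case gives the desired identity, so $\Xi$ is an isomorphism of right $\sH_{k[\hT]}$-modules.

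For the final assertion I would apply $-\otimes_{k[\hT]}k$ along the augmentation $k[\hT]\to k$: this identifies $\sH_{k[\hT]}\otimes_{k[\hT]}k$ with $\sH_k$, and by Proposition \ref{prop-verify-merkur} (the map (\ref{merkur}) is an isomorphism after $\otimes\,k$, here with $H=\{1\}$) it identifies $\KTs(\cF)\otimes_{k[\hT]}k$ with $\bbK'(\cF)$. Since the operators $D_w$ descend compatibly to $\bbK'(\cF)$ (the remark after Corollary \ref{cor-demrel}), the base-changed map $\Xi\otimes_{k[\hT]}k\colon\sH_k\to\bbK'(\cF)$ is again $\sH_k$-linear, and it is bijective as a base change of a bijection; this is the claimed isomorphism of right $\sH_k$-modules. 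As for the main obstacle: there is essentially none left once Corollary \ref{cor-basis} (freeness of $\KTs(\cF)$ on Schubert classes, via the localization argument of Proposition \ref{prop-verify-merkur}) and Proposition \ref{prop-explicit} (the explicit Demazure formula) are granted; the one point demanding genuine care is the bookkeeping with the opposite-ring convention for the right action, and matching it against the quadratic relations $Y_s^2=Y_s$ of $\sH$.
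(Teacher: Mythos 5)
Your proposal is correct and follows essentially the same route as the paper: bijectivity and $k[\hT]$-linearity from Corollary \ref{cor-basis}, equivariance checked on the generators $Y_s$ via Proposition \ref{prop-explicit} against the relations $Y_wY_s=Y_w$ ($ws<w$) and $Y_wY_s=Y_{ws}$ ($ws>w$), and the specialization at $\hT=1$ via Proposition \ref{prop-verify-merkur} and the remark after Corollary \ref{cor-demrel}. You merely spell out the induction on length and the base-change step that the paper leaves implicit.
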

\Pf The first map is $k[\hT]$-linear and bijective \ref{cor-basis}. It is equivariant for the action of any $Y_s$ by \ref{prop-explicit} and therefore for the action of any $Y_w$. The second statement follows from the remark after \ref{cor-demrel}.\qed

\vskip8pt

We show that the Grassmannian $\cG$ gives rise to a Hecke submodule in $\bbK_{\hT}'(\cF)$ for the image of $\Theta$.
One has $\ell( e^{-\lambda} w)=\ell(w^{-1}e^\lambda)=\ell(w)+\ell(e^{-\lambda})$ for any $\lambda\in\Lambda_+$ and $w\in W_0$. This means that
the elements from $\Lambda_-:=-\Lambda_+=w_0(\Lambda_+)$ are the minimal representatives in their corresponding cosets modulo $W_0$. We let $\pi^{*}_{-}$ be the
restriction of the homomorphism $\pi^*$ from (\ref{equ-satakeembedding}) to the submodule generated by the Schubert classes indexed by $\Lambda_-$ and consider its image ${\rm Im}\;\pi^{*}_-\subset \bbK'_{\hT}(\cF).$ Note that the ring $k[\Lambda_+]$ acts on $\bbK'_{\hT}(\cF)$ by restricting the $\sH_{k}$-action via $\Theta$.

\begin{cor} The isomorphism $\Xi$ induces an isomorphism $$ Y_{w_0} k[\Lambda_+]\car {\rm Im}\;\pi^{*}_{-}$$ as right $k[\Lambda_+]$-modules.
In particular, ${\rm Im}\;\pi^{*}_{-}$ is a free $k[\Lambda_+]$-module of rank $1$.
\end{cor}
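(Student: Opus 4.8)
The plan is to push the assertion through the right $\sH_{k[\hT]}$-module isomorphism $\Xi$ of the preceding theorem, so that it becomes an elementary identity in the Hecke algebra. First I would record what $\pi^*_-$ does on Schubert classes: for $\nu\in\Lambda_-$ the element $e^\nu\in W$ is the minimal-length representative of the coset $e^\nu W_0$, so $S_\nu=S_{e^\nu}\subset\cG$ is a Schubert variety, and by the discussion accompanying (\ref{equ-satakeembedding}) one has $\pi^*[\cO_{S_\nu}]=[\cO_{S_{e^\nu w_0}}]$ in $\bbK'_{\hT}(\cF)$. Hence ${\rm Im}\,\pi^*_-$ is, as a $k$-module, spanned by the classes $[\cO_{S_{e^\nu w_0}}]$ for $\nu\in\Lambda_-$; applying $\Xi^{-1}$, which carries $[\cO_{S_w}]$ to $Y_w$, this submodule corresponds to the $k$-span of $\{Y_{e^\nu w_0}:\nu\in\Lambda_-\}$ inside $\sH_{k[\hT]}$.

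The heart of the argument is the identity $Y_{w_0}\Theta(\lambda)=Y_{w_0 e^\lambda}$ for every $\lambda\in\Lambda_+$. Since $w_0$ is an involution and $\ell$ is invariant under inversion, the length formula recalled just before the corollary gives $\ell(w_0 e^\lambda)=\ell(w_0^{-1}e^\lambda)=\ell(w_0)+\ell(e^{-\lambda})=\ell(w_0)+\ell(e^\lambda)$, so the defining relations of $\sH$ yield $Y_{w_0}Y_{e^\lambda}=Y_{w_0 e^\lambda}$, while $\Theta(\lambda)=Y_{e^\lambda}$ by definition. Writing $w_0 e^\lambda=e^\nu w_0$ one has $\nu=w_0(\lambda)$, and $\lambda\mapsto\nu$ is a bijection from $\Lambda_+$ onto $w_0(\Lambda_+)=\Lambda_-$; hence $\{Y_{w_0}\Theta(\lambda):\lambda\in\Lambda_+\}=\{Y_{e^\nu w_0}:\nu\in\Lambda_-\}$. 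Comparing with the previous paragraph, $Y_{w_0}k[\Lambda_+]$ is exactly $\Xi^{-1}({\rm Im}\,\pi^*_-)$, so $\Xi$ restricts to a $k$-linear bijection $Y_{w_0}k[\Lambda_+]\car{\rm Im}\,\pi^*_-$; since $\Xi$ is right $\sH_{k[\hT]}$-linear and the $k[\Lambda_+]$-action on each side is the restriction of the $\sH$-action along $\Theta$, this bijection is $k[\Lambda_+]$-linear.

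It remains to see that $Y_{w_0}k[\Lambda_+]$ is free of rank $1$ over $k[\Lambda_+]$ with generator $Y_{w_0}$. The $k$-linear map $k[\Lambda_+]\to\sH_{k[\hT]}$, $\lambda\mapsto Y_{w_0}\Theta(\lambda)=Y_{w_0 e^\lambda}$, is $k[\Lambda_+]$-linear: for $\lambda,\lambda'\in\Lambda_+$ the same length computation (all relevant lengths being $\ell(w_0)$ plus the additive quantity $\langle\,\cdot\,,2\rho\rangle$) gives $\ell(w_0 e^{\lambda})+\ell(e^{\lambda'})=\ell(w_0 e^{\lambda+\lambda'})$, whence $Y_{w_0 e^\lambda}Y_{e^{\lambda'}}=Y_{w_0 e^{\lambda+\lambda'}}$; and it is injective because the $Y_{w_0 e^\lambda}$, $\lambda\in\Lambda_+$, are pairwise distinct members of the $\bbZ$-basis $\{Y_w\}_{w\in W}$ of $\sH$. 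Composing with $\Xi$ yields the asserted isomorphism $k[\Lambda_+]\car{\rm Im}\,\pi^*_-$ of right $k[\Lambda_+]$-modules, and in particular ${\rm Im}\,\pi^*_-$ is free of rank $1$. I expect the only genuinely delicate point to be keeping track of the two $w_0$-twists — the one hidden in $\pi^*(S_\nu)=S_{e^\nu w_0}$ and the one in $w_0 e^\lambda=e^{w_0(\lambda)}w_0$ — so that $Y_{w_0}\Theta(\Lambda_+)$ lands precisely on $\{Y_{e^\nu w_0}\}_{\nu\in\Lambda_-}$; the underlying length additivity is immediate from the formula recalled before the corollary, so nothing deeper is needed.
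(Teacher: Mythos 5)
Your argument is correct and is essentially the paper's own proof: both identify ${\rm Im}\,\pi^{*}_{-}$ with the span of the classes $[\cO_{S_{w_0e^\lambda}}]$, $\lambda\in\Lambda_+$, via the remark that $\pi^*$ sends the class of $S_{e^\nu}\subset\cG$ to that of $S_{e^\nu w_0}$ together with $e^\nu w_0=w_0e^{w_0(\nu)}$, and both rest on the same length additivity $\ell(w_0e^{\lambda}e^{\mu})=\ell(w_0e^{\lambda})+\ell(e^{\mu})$. You carry this out on the Hecke side as $Y_{w_0e^\lambda}Y_{e^{\mu}}=Y_{w_0e^{\lambda+\mu}}$, while the paper writes it as $D_{e^\mu}[\cO_{S_{w_0e^\lambda}}]=[\cO_{S_{w_0e^{\lambda+\mu}}}]$; under the isomorphism $\Xi$ these are the same computation.
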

\Pf If $\lambda\in\Lambda_-$, then $e^\lambda w_0=w_0e^{w_0(\lambda)}$ with $w_0(\lambda)\in\Lambda_+$. According to \ref{equ-satakeembedding} and the subsequent remarks, the $k[\hT]$-submodule ${\rm Im}\;\pi^{*}_{-}$ is therefore freely generated by the classes of
the $S_{w_0e^\lambda}$ where $\lambda\in\Lambda_+$. Given another $\mu\in\Lambda_+$, one has $\ell(w_0e^\lambda e^\mu)=\ell(w_0e^\lambda)+\ell(e^\mu)$
and therefore $D_{e^\mu}[\cO_{S_{w_0e^\lambda}}]= [\cO_{S_{w_0e^{\lambda+\mu}}}]\in {\rm Im}\;\pi^{*}_{-}$. This implies all assertions.
\qed

\vskip8pt

Remark: Suppose the group $G$ comes from a simply-connected semi-simple and simple group scheme $G$ over $\bbZ$. Let $F$ be a non-archimedean local field, i.e. equal to a finite extension of the field $\bbQ_p$ of $p$-adic numbers with residue field $k$ or else equal to $k((t))$, with $k\subseteq\overline{\bbF}_p$ finite. Let $o_F\subset F$ be its ring of integers. Let $I\subset G(F)$ be an Iwahori-subgroup of the group $G(F)$. The {\it Iwahori-Hecke algebra} of $G(F)$ over $k$ equals the convolution algebra $k[I\setminus G(F)/I]$ of $k$-valued functions with finite support on the double cosets of $G(F)$ mod $I$. A $k$-basis is given by the characteristic functions of the double cosets $I\setminus G(F)/I$ which, in turn, are in bijection with the group $W$. Denote by $\tau_w$ the characteristic function of $IwI, w\in W$.
By the Iwahori-Matsumoto presentation \cite{IwahoriMatsumoto} there is an algebra isomorphism $$k[I\setminus G(F)/I]\car \sH_k,~~\tau_w\mapsto \tilde{Y}_w.$$ We thus obtain the theorem mentioned in the introduction. Let $G_0=G(o_F)$ and consider the {\it spherical Hecke algebra} $k[G_0\setminus G(F) / G_0]$. In the $p$-adic case, the mod $p$ Satake map \cite{HerzigSatake} induces an algebra isomorphism
$$k[G_0\setminus G(F) / G_0]\car k[\Lambda_+].$$
For the compatibilies between the various isomorphisms we refer to \cite[8C.]{OllivierParabolic}.

\bibliographystyle{plain}
\bibliography{mybib}

\end{document}